\documentclass[12pt,twoside]{article}
\usepackage{amsmath,amssymb,amsthm,mathrsfs,color,times,textcomp,sectsty,verbatim}
\usepackage[colorlinks=true]{hyperref}
\hypersetup{urlcolor=blue, citecolor=red, linkcolor=blue}
\pagestyle{myheadings}
 \textwidth=16truecm 
 \textheight=23truecm 
 \oddsidemargin=0mm
 \evensidemargin=0mm
 \headheight=10mm
 \headsep=3mm
 \footskip=16pt
 \footnotesep=2pt
 \topmargin=-8mm
 \def \no{\nonumber}

 \allowdisplaybreaks
\begin{document}
\renewcommand{\proofname}{\bf Proof}
\let\oldsection\section
\renewcommand\section{\setcounter{equation}{0}\oldsection}
\renewcommand\thesection{\arabic{section}}
\renewcommand\theequation{\thesection.\arabic{equation}}
\newtheorem{claim}{\indent Claim}[section]
\newtheorem{theorem}{\indent Theorem}[section]
\newtheorem{lemma}{\indent Lemma}[section]
\newtheorem{proposition}{\indent Proposition}[section]
\newtheorem{definition}{\indent Definition}[section]
\newtheorem{remark}{\indent Remark}[section]
\newtheorem{corollary}{\indent Corollary}[section]
\newtheorem{example}{\indent Example}[section]
\title{\Large \bf
A nonlocal $\mathbf Q$-curvature flow on a class of closed manifolds of  dimension $\mathbf{n \geq 5}$}

\author{Xuezhang  Chen\thanks{ X. Chen: xuezhangchen@nju.edu.cn}\\
 \small
$^\ast$Department of Mathematics \& IMS, Nanjing University, Nanjing
210093, P. R. China
}

\date{}
\maketitle
\begin{abstract}
In this paper, we employ a nonlocal $Q$-curvature flow inspired by Gursky-Malchiodi's work \cite{gur_mal} to solve the prescribed $Q$-curvature problem on a class of closed manifolds: For $n \geq 5$, let $(M^n,g_0)$ be a smooth closed manifold, which is not conformally diffeomorphic to the standard sphere, satisfying either Gursky-Malchiodi's semipositivity hypotheses: scalar curvature $R_{g_0}>0$ and $Q_{g_0} \geq 0$ not identically zero or Hang-Yang's: Yamabe constant $Y(g_0)>0$, Paneitz-Sobolev constant $q(g_0)>0$ and  $Q_{g_0} \geq 0$ not identically zero. Let $f$ be a smooth positive function on $M^n$ and $x_0$ be some maximum point of $f$. Suppose either (a) $n=5,6,7$ or $(M^n,g_0)$ is locally conformally flat; or (b) $n \geq 8$, Weyl tensor at $x_0$ is nonzero. In addition, assume all partial derivatives of $f$ vanish at $x_0$ up to order $n-4$, then there exists a conformal metric $g$ of $g_0$ with its $Q$-curvature $Q_g$ equal to $f$. This result generalizes Escobar-Schoen's work [Invent. Math. 1986] on prescribed scalar curvature problem on any locally conformally flat manifolds of positive scalar curvature.

 {{\bf $\mathbf{2010}$ MSC:} Primary 53C44,58J05,53C21,45K05;~~ Secondary 35J30,35B40,35B50.}

{{\bf Keywords:} Nonlocal $Q$-curvature flow, prescribed $Q$-curvature, locally conformally flat, asymptotic behavior.}
\end{abstract}

\tableofcontents

\section{Introduction}

\indent \indent Let $(M^n,g)$ be a smooth Riemannian manifold  of dimension
$n \geq 3$, and $R_g, \text{Ric}_g$ be the scalar curvature,
Ricci curvature of the metric $g$, respectively. The following
conformally covariant operator of order four and $Q$-curvature on $(M^n,g)$ are first
discovered by S. Paneitz \cite{paneitz} in dimension four. Subsequently, in dimension $n\geq 3$ and $n \neq 4$, they are  recognized by T. Branson \cite{branson} and Fefferman-Graham
\cite{feffgra}, concretely,
\begin{eqnarray}
P_g&=&\Delta_g^2-\delta [(a_n R_g g+b_n \text{Ric}_g)d]+{n-4 \over 2}Q_g, \label{P_g}\label{Paneitz-Branson}\\
&& \hbox{~~with~~}a_n={(n-2)^2+4 \over 2(n-1)(n-2)}, b_n=-{4 \over n-2},\no
\end{eqnarray}
where $\delta$ is the divergent operator, $d$ is the exterior
derivative operator and the $Q$-curvature $Q_g$ of metric $g$ is defined by
\begin{eqnarray*}
Q_g=-{1 \over 2(n-1)}\Delta_g R_g+{n^3-4n^2+16n-16 \over 8(n-1)^2(n-2)^2} R_g^2-{2 \over (n-2)^2}|\text{Ric}_g|^2.
\end{eqnarray*}
Under  conformal change $\tilde{g}=u^{4 \over n-4}g$ for $n \neq 4$, there holds
\begin{equation}\label{conformal-invariant}
P_g(\varphi u)=u^{n+4 \over n-4}P_{\tilde{g}}(\varphi)
\end{equation}
for all $\varphi \in C^\infty(M^n)$. 

\indent Analogous to the Yamabe problem and the
prescribed scalar curvature problem, the prescribed
$Q$-curvature problem on closed manifolds involving the fourth order
conformally covariant operator can be proposed as follows: Give a
smooth function $f$ defined on a closed manifold $(M^n,g_0)$ with $n \geq 5$, can one find a
conformal metric $g=u^{4 \over n-4}g_0, u>0$ such that the
$Q$-curvature of the metric $g$ is equal to $f$? The prescribed
$Q$-curvature problem on $(M^n,g_0)$ with $n \geq 5$ is reduced to the
solvability of the equation
\begin{equation}\label{prescribed_Q-curvature}
P_{g_0}( u)={n-4 \over 2}f u^{n+4 \over n-4} \hbox{~~and~~} u>0 \hbox{~~on~~} M^n,
\end{equation}
where $P_{g_0}$ is fourth-order conformally covariant operator on $(M^n,g_0)$ defined by \eqref{Paneitz-Branson}. Very recently, some remarkable progresses have been made in the constant $Q$-curvature problem on closed manifolds after Chang-Yang's pioneering work \cite{cy}. For the existence of conformal metric to the constant $Q$-curvature problem, in which $f$ is a positive constant, under the assumptions that (a) $Q_{g_0} \geq 0$ and positive somewhere; (b) the scalar curvature $R_{g_0} \geq 0$, an affirmative answer is recently provided by Gursky-Malchiodi in \cite{gur_mal} adopting a nonlocal flow approach. Especially, a strong maximum principle for Paneitz operator was first established in their paper.  Later, Hang-Yang in their recent works \cite{hy1,hy2,hy3} relaxed the positivity of scalar curvature $R_{g_0}$ in Gursky-Malchiodi's assumptions to the Yamabe constant $Y(g_0)>0$ and the Paneitz-Sobolev constant $q(g_0)>0$, and used variational method to solve the constant $Q$-curvature problem on a class of closed manifolds of dimensions other than four. For more background on the $Q$-curvature problem, one may refer to \cite{cy,dhl,gur_mal,jlx} and the references therein.

For more than one decade, conformal geometric flows have played important roles in prescribed curvature problems in conformal geometry and have become very powerful tools in such problems comparing with the classical variational methods. In $2003$, S. Brendle initiated a negative gradient flow
approach in \cite{bre2} to deal with the prescribed $Q$-curvature
problem with critical exponential exponent on a closed Riemmanian manifold $(M^n,g_0)$. Not long after Brendle's work, M. Struwe adopted
this approach in \cite{str} to study the Nirenberg problem, namely
the prescribed Gauss curvature problem. Then  Malchiodi and Struwe
applied this approach to the prescribed $Q$-curvature problem on
$S^4$ in \cite{mal_str}, where they made some further developments
in Morse theory part. Recently, the author and X. Xu in \cite{chxu} adopted this method to the perturbation result for the prescribed scalar
curvature problem on $S^n$ with $n \geq 3$. However, to
the author's knowledge, in the present
stage, the presence of only one simple bubble in blow-up analysis is crucial to all the above mentioned works. The closely related topics to the above conformal geometric flows are the Yamabe flow \cite{bre3,bre1} and the fractional Yamabe flow \cite{jx}. Unfortunately, there still  exist some technical difficulties to show the long time existence of \underline{positive} solutions of such conformal geometric flows of higher order, even though such a strong maximum principle (e.g. for Paneitz operator in some special class of closed manifolds) has been established. After Baird-Fardoun-Regbaoui's work \cite{bfr} and Gursky-Malchiodi's \cite{gur_mal} emerged, the nonlocal flow comes into play and has shown its power in such prescribed curvature problems.

We are now in position to state our main theorem.
\begin{theorem}\label{main_Thm}
Let $(M^n, g_0)$ be a smooth closed manifold of dimension $n\geq 5$ and not conformally diffeomorphic to the standard sphere $S^n$. Let $f$ be a smooth positive function on $M^n$ and $x_0$ be a maximum point of $f$. In the case of either (a) $n=5,6,7$ or $(M^n,g_0)$ is locally conformally flat, or (b) $n \geq 8$, Weyl tensor at $x_0$  is nonzero, suppose either $R_{g_0}>0, Q_{g_0}\geq 0$ not identically zero or the Yamabe constant $Y(g_0)>0$, the Paneitz-Sobolev constant $q(g_0)>0$, $Q_{g_0}\geq 0$ not identically zero.  In addition, assume
\begin{equation}
 \nabla_{g_0}^l f(x_0)=0 \hbox{~~for~~} 1 \leq l \leq n-4.\label{vanishing_order}
\end{equation}
Then there exists a conformal metric $g$ of $g_0$ with its $Q_g$-curvature equal to $f$.
\end{theorem}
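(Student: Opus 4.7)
The plan is to adapt the nonlocal $Q$-curvature flow of Gursky-Malchiodi to the \emph{prescribed} (as opposed to constant) $Q$-curvature setting, and then close the argument by an Escobar-Schoen type Pohozaev blow-up analysis that exploits the flatness hypothesis \eqref{vanishing_order} on $f$ at $x_0$.

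First I would set up the flow. Under either set of hypotheses, the Paneitz operator $P_{g_0}$ satisfies a strong maximum principle (due to Gursky-Malchiodi, resp.\ Hang-Yang), so its Green's function $G_{g_0}$ is positive off the diagonal and $P_{g_0}^{-1}$ maps positive functions to positive functions. Setting $p=(n+4)/(n-4)$, I would consider the nonlocal flow
$$
\partial_t u \;=\; -u \;+\; \mu(t)\, P_{g_0}^{-1}\!\bigl(f\,u^{p}\bigr),
$$
with the scalar multiplier $\mu(t)$ chosen to preserve the normalization $\int_{M^n} f\,u^{p+1}\,dv_{g_0}\equiv 1$; fixed points of this flow are precisely (rescaled) solutions of \eqref{prescribed_Q-curvature}. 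Positivity of $G_{g_0}$ forces $P_{g_0}^{-1}(fu^p)>0$, and the Duhamel representation of the ODE-in-$t$ structure preserves $u(t)>0$, bypassing the sign-change difficulties inherent in the naive fourth-order gradient flow.

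Second, I would prove long-time existence and a convergence dichotomy. Introduce the energy
$$
E(u) \;=\; \frac{\int_{M^n} u\,P_{g_0}u\,dv_{g_0}}{\left(\int_{M^n} f\,u^{p+1}\,dv_{g_0}\right)^{2/(p+1)}},
$$
and verify $\frac{d}{dt}E(u(t))\leq 0$ with equality only at stationary points. Combined with coercivity of $E$ (coming from $Y(g_0)>0,q(g_0)>0$, which controls $\|u\|_{W^{2,2}}$), one gets uniform energy bounds, from which standard fourth-order parabolic regularity yields long-time existence. Passing to a sequence $t_k\to\infty$, along a subsequence either $u(t_k)\to u_\infty$ in $C^{4,\alpha}$ with $u_\infty>0$ (the desired solution) or the flow concentrates.

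The main obstacle and the heart of the proof is excluding the concentration scenario by combining the geometric hypotheses with \eqref{vanishing_order}. By the standard concentration-compactness scheme for critical fourth-order equations, blow-up produces after conformal rescaling finitely many bubbles modeled on the sphere solution $\bigl(1+|x|^2\bigr)^{-(n-4)/2}$; energy considerations plus the maximum-point analysis of $f$ force each concentration point to be a maximum of $f$, say $x_0$. I would then derive a Paneitz-Pohozaev identity on small geodesic balls $B_r(x_0)$ using the radial vector field $\sum x^i\partial_i$, and track the asymptotic behavior as $r\to 0$. The vanishing $\nabla^l_{g_0}f(x_0)=0$ for $1\leq l\leq n-4$ is exactly sharp enough to make the $f$-contribution on the right-hand side negligible, so the identity reduces to a comparison between the bubble energy and a geometric invariant of $(M^n,g_0)$ at $x_0$. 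For $n\geq 8$ with nonzero Weyl at $x_0$, the dominant geometric term is $|W_{g_0}(x_0)|^2$, while for $n=5,6,7$ or the locally conformally flat case, the dominant term is the positive mass of the Green's function of $P_{g_0}$ at $x_0$; under the Gursky-Malchiodi or Hang-Yang hypotheses and the assumption that $(M^n,g_0)$ is not conformally the round sphere, a positive-mass theorem for the Paneitz operator forces this to be strictly positive. Either way, the sign of the leading term contradicts the bubble's Pohozaev contribution, ruling out blow-up. The most delicate technical point will be verifying that blow-up is isolated and simple for this nonlocal flow and carrying the Pohozaev computation through the nonlocal right-hand side $\mu(t)P_{g_0}^{-1}(fu^p)$ rather than a local nonlinearity.
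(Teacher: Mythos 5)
Your flow setup, positivity preservation via the strong maximum principle, and energy monotonicity are essentially the same as the paper's, but the crucial step where the geometric hypotheses and the vanishing condition \eqref{vanishing_order} enter is structured very differently, and as written your version has a real gap.

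The paper never performs a blow-up/Pohozaev analysis along the flow. Instead, Section 6 devotes considerable effort to \emph{constructing a specific initial datum} $u_0 = u_{0\epsilon}$ (a Gursky--Malchiodi bubble test function $\hat u_\epsilon$, corrected by the Green's function) such that $E_f[u_0] \leq q(S^n)/(\max f)^{(n-4)/n} - \epsilon_0$ strictly below the single-bubble threshold. Both the vanishing of $\nabla^l f(x_0)$ for $l\le n-4$ and the geometric input (Weyl tensor at $x_0$ for $n\ge 8$, or the Paneitz positive mass theorem for $n=5,6,7$/locally conformally flat) are used \emph{in this test function expansion}, exactly as in Escobar--Schoen's original argument for scalar curvature. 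Because $E_f$ is non-increasing along the flow, the subcritical bound persists; then a short Sobolev-inequality computation in Section 7 (estimate \eqref{lb_L^2}) gives a uniform lower bound $\int u(t)^2 \geq C_0 > 0$, which prevents the weak limit $u_\infty$ from being zero. After that, weak $H^2$ convergence plus $F_2(t_k)\to 0$ plus elliptic regularity plus the strong maximum principle finish the proof. No concentration analysis, no Pohozaev identity, no localization of bubbles.

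Your proposal instead runs the flow from \emph{arbitrary} positive initial data, accepts that concentration may occur, and plans to rule it out a posteriori by a Pohozaev argument at the blow-up point. This is a genuinely different route, and it has a gap you have not closed: without the subcritical energy bound from the initial data, you cannot guarantee (i) that concentration is a single bubble, or (ii) that it occurs at a maximum of $f$ where the vanishing hypothesis \eqref{vanishing_order} is available. Your sentence ``energy considerations plus the maximum-point analysis of $f$ force each concentration point to be a maximum of $f$'' does not follow from coercivity alone; with large initial energy, multi-bubble blow-up at non-maximum points is not excluded, and at such a point your Pohozaev identity has nonvanishing $f$-derivative terms of low order that you cannot control. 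The delicacy you flag at the end (simple, isolated blow-up; nonlocal Pohozaev) is exactly where this fails. If you inserted the paper's test-function initial-datum construction, you would get the single-bubble/maximum-point localization for free, but then the Pohozaev step becomes unnecessary: the energy bound alone already yields the $L^2$ lower bound and rules out mass loss without any blow-up machinery.

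In short: same ingredients (strong maximum principle, PMT/Weyl, vanishing of $f$-derivatives), different location in the argument. The paper front-loads the hard work into the initial data (test function expansion, mirroring Escobar--Schoen and Gursky--Malchiodi), making the convergence step soft; you propose to run the flow cheaply and do the hard work at the blow-up stage, but you have not supplied the single-bubble localization that this approach requires, and supplying it would effectively require the test-function construction anyway.
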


\begin{remark}
Theorem \ref{main_Thm} is a generalization of Theorem 2.1 in Escobar-Schoen's work \cite{es} on prescribed scalar curvature problem for any locally conformally flat manifold of dimension at least three, which is not conformally diffeomorphic to the standard sphere.
\end{remark}

\begin{remark}\label{leftcase}
One case left open in Theorem \ref{main_Thm} is that $(M^n,g_0)$ with $n \geq 8$ is not locally conformally flat and Weyl tensor is zero at any maximum point of $f$, due to the lack of Positive Mass Theorem for Paneitz operator, which is an obstruction in the construction of initial data of the nonlocal $Q$-curvature flow in this case.
\end{remark}

The vanishing order condition \eqref{vanishing_order} on $f$ at some maximum point is used to construct some positive initial data  of the nonlocal $Q$-curvature flow satisfying either semi-positivity hypotheses \eqref{gm_pos} or \eqref{hy_pos} below together with some restrictions on energy bounds. In dimension five, the condition \eqref{vanishing_order} is automatically satisfied.

In section \ref{sect2}, we introduce a nonlocal $Q$-curvature flow which is a negative gradient flow of $E_f[u]$ in some suitable Hilbert space, and a detailed proof for short time existence of the flow is available. In section \ref{sect3}, we show the positivity of $u(t,\cdot)$ for any time $t \geq 0$, as well as some elementary estimates involving $E_f[u]$ and $\alpha(t)$ etc. In section \ref{sect4}, the global existence of the nonlocal flow for some special class of initial data is presented. In section \ref{sect5}, we show the asymptotic convergence of $\int_{M^n}u_t P_{g_0}u_td\mu_{g_0}$ and the positivities of $Q$-curvature, as well as of the scalar curvature under hypotheses \eqref{gm_pos} for time $t \geq 0$.  Reminiscing about Aubin and Schoen's dichotomy on the Yamabe problem, in either (a) $n=5,6,7$ or $(M^n,g_0)$ is locally conformally flat, or (b) $n \geq 8$, Weyl tensor at some maximum point of $f$ is nonzero cases, in section \ref{sect6}, we construct  initial data satisfying either \eqref{gm_pos} or \eqref{hy_pos}, as well as some restrictions on initial energy bounds. Finally, in section \ref{sect7}, we establish sequential convergence of the nonlocal flow meanwhile completing the proof of Theorem \ref{main_Thm}. \\
{\bf Acknowledgments:} 
The author is supported through NSFC (No.11201223), A Foundation for the Author of National Excellent Doctoral Dissertation of China (No. 201417)  and Program for New Century Excellent Talents in University (NCET-13-0271). He is grateful to Professor Xingwang Xu for his invitation to the Institute for Mathematical Sciences at National University of Singapore, where parts of the work were carried out. He would like to thank Professor Yanyan Li and Dr. Jingang Xiong for so many helpful discussions and so much precious advice during this project, especially the latter part of the current proof of Lemma  \ref{local_existence} suggested by Professor Yanyan Li.

\section{A non-local $\mathbf{Q}$-curvature flow}\label{sect2}
\indent \indent Throughout this paper, suppose $(M^n, g_0)$ is a smooth closed manifold of dimension $n \geq 5$, satisfying either Gursky-Malchiodi's semipositivity hypotheses:
\begin{equation}\label{gm_pos}
\hbox{scalar curvature~~} R_{g_0} \geq 0, Q_{g_0} \geq 0 \hbox{~~and~~} Q_{g_0}>0 \hbox{~~somewhere};
\end{equation}
or Hang-Yang's:
\begin{equation}\label{hy_pos}
\hbox{Yamabe constant~~} Y(g_0)>0, q(g_0)>0 \hbox{~~and~~} Q_{g_0} \geq 0 \hbox{~~is not identically zero.}
\end{equation}

Under the above assumption \eqref{gm_pos}, a strong maximum principle for Paneitz operator $P_{g_0}$ in \cite{gur_mal} plays an important role in the existence of constant $Q$-curvature problem on closed manifolds. Define a Paneitz-Sobolev constant by
$$q(g_0)=q(M^n, g_0):= \inf\left\{\frac{\int_{M^n}w P_{g_0}wd\mu_{g_0}}
{(\int_{M^n}w^{2n \over n-4}d\mu_{g_0})^{n-4 \over n}}; w \in
H^2(M^n,g_0)\setminus \{0\}\right\}$$
which is independent of the selection of the metric in the conformal class of $g_0$. The Paneitz-Sobolev constant enjoys an analogous property of Yamabe constant proved by T. Aubin \cite{aubin}: 
\begin{lemma}\label{Paneitz_constant}
On a closed Riemannian manifold $(M^n,g_0)$ of dimension $n \geq 8$, suppose there exists $p \in M^n$ such that the Weyl tensor $W_{g_0}(p) \neq 0$, then $q(g_0)<q(S^n)$. 
\end{lemma}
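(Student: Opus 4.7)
The plan is to adapt Aubin's classical test-function construction (originally for the Yamabe invariant) to the fourth-order Paneitz-Sobolev quotient, using the non-vanishing Weyl tensor at $p$ to produce a test function whose quotient strictly beats $q(S^n)$. Since $q(g_0)$ is a conformal invariant, I would first replace $g_0$ by a conformal representative admitting Lee-Parker conformal normal coordinates at $p$: in a normal coordinate neighborhood of $p$, $\sqrt{\det g_0(x)} = 1 + O(|x|^N)$ for arbitrarily large $N$, and the Taylor expansion of $g_{ij}$ is driven purely by the Weyl tensor and its covariant derivatives, starting at $g_{ij}(x) = \delta_{ij} - \tfrac{1}{3} W_{ikjl}(p) x^k x^l + O(|x|^3)$.

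Next, let $U(y) = c_n (1+|y|^2)^{-(n-4)/2}$ be the (known) Euclidean Paneitz-Sobolev extremal, so that the Euclidean ratio $\int_{\mathbb{R}^n}(\Delta U)^2\,dy \big/ (\int_{\mathbb{R}^n} U^{2n/(n-4)}\,dy)^{(n-4)/n}$ equals $q(S^n)$. Set $u_\varepsilon(x) = \chi(x)\, \varepsilon^{-(n-4)/2} U(x/\varepsilon)$, where $\chi$ is a radial cutoff supported in a small conformal normal coordinate ball about $p$. Rescaling $x = \varepsilon y$, the leading contribution of both the numerator $\int u_\varepsilon P_{g_0} u_\varepsilon\, d\mu_{g_0}$ and denominator $(\int u_\varepsilon^{2n/(n-4)} d\mu_{g_0})^{(n-4)/n}$ reproduces the Euclidean ratio $q(S^n)$, while the lower-order Ricci-, scalar- and $Q$-curvature terms in $P_{g_0}$ give strictly smaller corrections for $n\ge 8$. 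A key algebraic point is that the linear-in-$W$ correction in $(\Delta_{g_0} u_\varepsilon)^2$ vanishes: by the radial symmetry of $U$ it reduces to contractions of the form $W_{ikjl}(p)\, y^i y^j y^k y^l$ and traces of $W$, both of which vanish by the symmetries and trace-freeness of the Weyl tensor. Hence the first surviving correction is quadratic in $W(p)$.

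A direct expansion, analogous to those carried out for fourth-order operators by Esposito-Robert and Hebey-Robert, should then yield
\begin{equation*}
q(g_0) \;\le\; \frac{\int_{M^n} u_\varepsilon P_{g_0} u_\varepsilon\, d\mu_{g_0}}{\bigl(\int_{M^n} u_\varepsilon^{2n/(n-4)}\, d\mu_{g_0}\bigr)^{(n-4)/n}} \;=\; q(S^n) - C_n\, |W_{g_0}(p)|^2\, \varepsilon^4\, \theta_n(\varepsilon) + o(\varepsilon^4 \theta_n(\varepsilon)),
\end{equation*}
where $C_n>0$ and $\theta_n(\varepsilon) = 1$ for $n\ge 9$, $\theta_n(\varepsilon) = |\log\varepsilon|$ for $n = 8$. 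Taking $\varepsilon$ small enough gives $q(g_0) < q(S^n)$. The main obstacle I would expect is verifying the negative sign of the quadratic-in-$W$ coefficient: this reduces to a careful integration by parts on $\mathbb{R}^n$ combining $(\Delta U)^2$ with $W^2$-weighted derivative integrands, in which the sign is ultimately forced by the pointwise positivity of $|W(p)|^2$ combined with Bianchi-type cancellations. The integrability threshold $n\ge 8$ arises precisely because $\int_{\mathbb{R}^n} |y|^4 (\partial^2 U(y))^2 \,dy$ converges only for $n>8$, with the borderline log divergence at $n=8$ being harmless (in fact beneficial, as it enlarges the correction).
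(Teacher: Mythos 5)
Your proposal follows essentially the same route as the paper's cited proof (Esposito--Robert \cite{er} and the author's note \cite{chen}): conformal normal coordinates at $p$, the Aubin-type test function $u_\varepsilon$ built from the Euclidean Paneitz extremal, cancellation of the linear-in-$W$ term by the Weyl tensor's algebraic symmetries, and a strictly negative quadratic-in-$W$ correction of size $\varepsilon^4$ (resp.\ $\varepsilon^4|\log\varepsilon|$ for $n=8$), which is exactly the expansion \eqref{hd_energy_est} invoked in the proof of Lemma~\ref{initial_hd_nlcf}. Your identification of the $n\geq 8$ integrability threshold and the log-borderline at $n=8$ also matches; the only part left unverified is the sign of the quadratic coefficient, which is indeed the technical core of the cited references.
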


One may refer to the author's unpublished note \cite{chen} or the process of the proof in \cite{er} Theorem 1.1 for a detailed proof of Lemma \ref{Paneitz_constant}. 

Later, Hang and Yang (cf. \cite{hy2,hy3}) relaxed the above hypotheses \eqref{gm_pos} to weaker ones \eqref{hy_pos}, such a strong maximum principle still holds true. Both conditions \eqref{gm_pos} and \eqref{hy_pos} derive that $\hbox{ker}P_{g_0}=0$. However, up to now, it is not clear of the coercivity of Paneitz operator or $q(g_0)>0$ can follow from $Y(g_0)>0, Q_{g_0} \geq 0$ not identically zero whether or not.
 
Notice that under the Gursky-Malchiodi's hypotheses \eqref{gm_pos}, it follows from Proposition 2.3 in \cite{gur_mal} that $P_{g_0}$ is coercive and $q(g_0)$ is positive. Then, under either the hypotheses \eqref{gm_pos} or \eqref{hy_pos}, we claim that the norm 
$$\langle v, P_{g_0}v \rangle_{g_0}^{1 \over 2}=\Big(\int_{M^n}v P_{g_0}v d\mu_{g_0}\Big)^{1 \over 2}, \hbox{~~for~~} v \in H^2(M^n,g_0)$$
and $\|v\|_{H^2(M^n,g_0)}$ are equivalent. Indeed, by the positivity of Paneitz-Sobolev constant $q(g_0)$ and H\"{o}lder's inequality, there holds
$$\int_{M^n} v^2 d\mu_{g_0} \leq \hbox{vol}(M^n,g_0)^{4 \over n} \Big(\int_{M^n} v^{2n \over n-4}d\mu_{g_0}\Big)^{n-4 \over n}\leq \frac{\hbox{vol}(M^n,g_0)^{4 \over n}}{q(g_0)} \int_{M^n}vP_{g_0}v d\mu_{g_0}.$$
From the interpolation Sobolev inequality, given $0<\epsilon<1$, one has
\begin{eqnarray*}
\int_{M^n}|\Delta_{g_0}v|^2d\mu_{g_0}&=&\int_{M^n}vP_{g_0}v d\mu_{g_0}+\Big(\int_{M^n}|\Delta_{g_0}v|^2 d\mu_{g_0}-\int_{M^n}vP_{g_0}v d\mu_{g_0}\Big)\\
&\leq& \int_{M^n}vP_{g_0}v d\mu_{g_0}+\epsilon\int_{M^n}|\Delta_{g_0}v|^2d\mu_{g_0}+C_\epsilon \int_{M^n}v^2 d\mu_{g_0}.
\end{eqnarray*}
Putting the above facts together, one has
\begin{eqnarray*}
\int_{M^n}|\Delta_{g_0}v|^2d\mu_{g_0}&\leq& \frac{1}{1-\epsilon}\int_{M^n}vP_{g_0}v d\mu_{g_0}+C_\epsilon \int_{M^n}v^2 d\mu_{g_0}\\
&\leq& C\int_{M^n}vP_{g_0}v d\mu_{g_0},
\end{eqnarray*}
which yields
$$\|v\|^2_{H^2(M^n,g_0)}\leq C \int_{M^n}vP_{g_0}v d\mu_{g_0}.$$
On the other hand, it is easy to verify the inverted direction of the above inequality.

Let $f$ be a smooth positive function defined on $M^n$. Motivated by Baird-Fardoun-Regbaoui \cite{bfr} and Gursky-Malchiodi \cite{gur_mal}, we extend their ideas to introduce a nonlocal $Q$-curvature flow:
\begin{eqnarray}\label{gm_Q_flow_orig}
\left\{
\begin{array}{ll}
\frac{\partial u}{\partial t}=-u+\frac{n-4}{2} P_{g_0}^{-1}\big( \alpha f |u|^{\tfrac{n+4}{n-4}}\big) \hbox{~~for~~} (x,t) \in M^n \times [0,T);\\
u(0,x)=u_0\in C^\infty(M^n);
\end{array}
\right.
\end{eqnarray}
coupled with the constraint function of time $t$:
\begin{equation}\label{constraint_factor}
\alpha(t)=\frac{2}{n-4}\frac{\int_{M^n}uP_{g_0}u d\mu_{g_0}}{\int_{M^n}fu^{2n \over n-4}d \mu_{g_0}}.
\end{equation}

\begin{remark}
From \cite{gur_mal} Proposition 2.4 or \cite{hy2} Lemma 3.2, there exists a Green's function $G_{g_0}(p,\cdot)$ of the Paneitz operator $P_{g_0}$ with pole at $p \in M^n$, such that $G_{g_0}(p,\cdot)>0$ in $M^n \setminus \{p\}$. Thus, given a function $f \in C^\infty(M^n)$, the operator of $P_{g_0}^{-1}(f)$ can be interpreted as the convolution between the Green's function $G_{g_0}(p,\cdot)$ of $P_{g_0}$ and the function $f$, in other words,
$$P_{g_0}^{-1}(f)(p)=\int_{M^n}G_{g_0}(p,x)f(x)d\mu_{g_0}(x).$$
\end{remark}

\begin{remark}\label{rm1}
Under conformal change of metrics $g=u^{\tfrac{4}{n-4}}g_0$, using the conformal covariance \eqref{conformal-invariant} of the Paneitz operator $P_g$, we find that its inverse $P_g^{-1}$ is conformally covariant:
$$P_g^{-1}=u^{-1}P_{g_0}^{-1}\big(u^{\tfrac{n+4}{n-4}}\cdot\big),$$
due to the simple fact that $\forall \psi \in C^\infty(M^n)$,
$$u^{-1}P_{g_0}^{-1}\big(u^{\tfrac{n+4}{n-4}}P_g\psi\big)=u^{-1}P_{g_0}^{-1}\big(P_{g_0}(u\psi)\big)=\psi.$$
\end{remark}

\subsection{Short time existence}
\indent \indent This subsection is devoted to the study of the short time existence to the flow problem \eqref{gm_Q_flow_orig}.
\begin{lemma}\label{local_existence}
There exists a unique solution in $C^0([0,T]; C^{4,\lambda}(M^n))$ to the flow problem \eqref{gm_Q_flow_orig} for any $0<\lambda<1$ and $0<T\leq \infty$.
\end{lemma}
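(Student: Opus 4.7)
The plan is to set up a Banach fixed point argument in the space $X_T = C^0([0,T]; C^{4,\lambda}(M^n))$. Under hypothesis \eqref{gm_pos} or \eqref{hy_pos} we have $\ker P_{g_0} = 0$ and the equivalence of norms shown above; by Schauder theory, $P_{g_0}^{-1}: C^{0,\lambda}(M^n) \to C^{4,\lambda}(M^n)$ is a bounded linear operator. Rewrite the problem in integral form
$$
\Phi[u](t) = u_0 + \int_0^t \Bigl(-u(s) + \tfrac{n-4}{2}\, P_{g_0}^{-1}\bigl[\alpha[u](s)\, f\, |u(s)|^{(n+4)/(n-4)}\bigr]\Bigr)\,ds,
$$
with $\alpha[u](s)$ computed from \eqref{constraint_factor} with $u(s)$ in place of $u$. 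Since $u_0 \in C^\infty$ with $u_0 > 0$, both the numerator $\int_{M^n} u_0 P_{g_0} u_0\, d\mu_{g_0}$ (coercivity) and the denominator $\int_{M^n} f\, u_0^{2n/(n-4)}\, d\mu_{g_0}$ are strictly positive.

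The key estimates are: (i) Schauder boundedness of $P_{g_0}^{-1}$; (ii) the Nemytskii map $u \mapsto |u|^{(n+4)/(n-4)}$ is locally Lipschitz from $C^0(M^n)$ to $C^{0,\lambda}(M^n)$ on bounded sets of $C^{4,\lambda}(M^n)$, using the pointwise bound $\bigl||u|^p - |v|^p\bigr| \leq p\,(|u|+|v|)^{p-1}|u-v|$ together with interpolation of H\"older seminorms; and (iii) continuity of the quotient $u \mapsto \alpha[u]$, provided that one restricts to $u$ staying in a ball around $u_0$. I would therefore work in the closed ball
$$
B_{R,T} = \{\, u \in X_T : \|u - u_0\|_{X_T} \leq R\,\},
$$
for $R$ small enough that every $u \in B_{R,T}$ satisfies $u > 0$ uniformly on $M^n \times [0,T]$ and
$$
\int_{M^n} f\, u^{2n/(n-4)}\, d\mu_{g_0} \geq \tfrac{1}{2} \int_{M^n} f\, u_0^{2n/(n-4)}\, d\mu_{g_0},\qquad \int_{M^n} u\, P_{g_0} u\, d\mu_{g_0} \leq C_0,
$$
so that $\alpha[u](s)$ is uniformly bounded. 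Combining (i)--(iii) gives $\|\Phi[u](t) - u_0\|_{C^{4,\lambda}} \leq C(u_0, R)\cdot t$ and $\|\Phi[u] - \Phi[v]\|_{X_T} \leq C(u_0, R)\cdot T\cdot \|u - v\|_{X_T}$. Choosing $T$ sufficiently small yields both the self-map property $\Phi(B_{R,T}) \subseteq B_{R,T}$ and the contraction property, so the Banach fixed point theorem produces a unique fixed point $u \in X_T$, which by the equation has $\partial_t u \in X_T$ as well. Uniqueness of solutions in the larger class $C^0([0,T]; C^{4,\lambda})$ follows by applying the same contraction estimate to any two solutions sharing the same initial data on a small time interval and iterating.

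The main obstacle is the interaction between the nonlocal constraint $\alpha[u]$ and the nonlinear Nemytskii operator. Since $(n+4)/(n-4)$ is generally not an integer, one cannot expect $u \mapsto |u|^{(n+4)/(n-4)}$ to preserve $C^{4,\lambda}$ regularity directly; however, it preserves $C^{0,\lambda}$, and the $P_{g_0}^{-1}$ step gains four derivatives, which is exactly why the scale $C^{4,\lambda}$ closes under $\Phi$. The delicate bookkeeping lies in controlling $\alpha[u] - \alpha[v]$: a short computation using the boundedness of denominators away from zero reduces the Lipschitz estimate of $\alpha$ to Lipschitz estimates of $\int u P_{g_0} u\, d\mu_{g_0}$ and $\int f |u|^{2n/(n-4)}\, d\mu_{g_0}$, both of which are controlled by the $C^{4,\lambda}$ norm. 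Once this is in hand, the contraction closes and gives short time existence and uniqueness in $C^0([0,T]; C^{4,\lambda}(M^n))$.
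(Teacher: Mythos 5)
Your proposal is correct but takes a genuinely different route from the paper. You carry the nonlocal functional $\alpha[u]$ through the whole fixed-point argument, which forces you to (i) establish a Lipschitz estimate for $u \mapsto \alpha[u]$ from $C^{4,\lambda}$ to $\mathbb{R}$ by separately controlling the numerator $\int u P_{g_0}u\,d\mu_{g_0}$ and the denominator $\int f u^{2n/(n-4)}\,d\mu_{g_0}$, and (ii) restrict to a small ball around $u_0$ in which $u$ stays bounded away from zero, so that the denominator of $\alpha$ is bounded away from zero and the power nonlinearity is nicely Lipschitz. The paper instead removes $\alpha$ \emph{before} any fixed-point work: it reparametrizes time by $s(t)=\int_0^t \mu(\tau)\,d\tau$ and rescales $u(t,x)=e^{s(t)-t}\tilde u(x,s(t))$, which transforms \eqref{gm_Q_flow_orig} into the $\alpha$-free modified flow $\tilde u_s = -\tilde u + \tfrac{n-4}{2}P_{g_0}^{-1}(f|\tilde u|^{(n+4)/(n-4)})$; the contraction mapping argument is then carried out for this simpler integral equation, and one recovers the original flow via the algebraic identity $\alpha(t)=\mu(t)\,e^{-\frac{8}{n-4}(s(t)-t)}$. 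The paper's route therefore avoids all the bookkeeping involving $\alpha[u]-\alpha[v]$, at the price of an extra (scalar, ODE-level) change of variables; your route is more direct and self-contained, but requires the positivity restriction from the outset, which is harmless in the paper's later applications where $u_0 \in C_\ast^\infty$ is strictly positive.
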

\begin{proof}
It is sufficient to show the short time existence of the following modified flow problem
\begin{eqnarray}\label{gm_Q_flow_modified}
\left\{
\begin{array}{ll}
\frac{\partial \tilde{u}}{\partial s}=-\tilde{u}+\frac{n-4}{2}P_{g_0}^{-1}\big(f |\tilde{u}|^{\tfrac{n+4}{n-4}}\big) \hbox{~~for~~} (x,s) \in M^n \times [0,T);\\
\tilde{u}(s=0,x)=u_0\in C^\infty(M^n).
\end{array}
\right.
\end{eqnarray}
Indeed, the modified flow problem \eqref{gm_Q_flow_modified} differs from the original one \eqref{gm_Q_flow_orig} by scalings in time and functions. Set
$$s(t)=\int_0^t \mu(\tau)d\tau \hbox{~~and~~} u(t,x)=e^{s(t)-t}\tilde{u}(x,s(t))$$
where
$$\mu(t)=\frac{2}{n-4}\frac{\int_{M^n} \tilde{u}P_{g_0}\tilde{u}d\mu_{g_0}}{\int_{M^n}f \tilde{u}^{2n \over n-4}d\mu_{g_0}}.$$
By using the expressions of $\alpha(t)$ and $\mu(t)$ to get
$$\alpha(t)=\mu(t) e^{-\tfrac{8}{n-4}(s(t)-t)},$$
we conclude that if the short time existence of the above modified flow problem \eqref{gm_Q_flow_modified} is established, so is the flow problem \eqref{gm_Q_flow_orig}.

Next we manage to establish the short time existence to \eqref{gm_Q_flow_modified} by the contraction mapping principle. For simplicity we still use $u$ instead of $\tilde u$ and time variable $t$ instead of $s$. Let $X=C^0([0,T]; C^{4,\lambda}(M^n))$ for some $\lambda \in (0,1)$. For any fixed $\delta>0$, define
$$X_\delta:=\{v \in X; v(0,x)=u_0, \|v-u_0\|_X \leq \delta\}$$
where $u_0 \in C^\infty(M^n)$, and put the distance on $X_\delta$ by
$$\rho(v,w):=\|v-w\|_X \hbox{~~for~~} v,w \in X_\delta.$$
It is not hard to verify that the space $(X_\delta,\rho)$ is a complete metric space. Define a map $L: X_\delta \to X$ by
$$L(u)(t,x)=u_0(x)-\int_0^t u(\tau,x)d\tau+\frac{n-4}{2}\int_0^t P_{g_0}^{-1}(f|u|^{n+4 \over n-4})(\tau,x)d\tau.$$
Recall that, from the Schauder estimates of elliptic equations, there holds
$$\|P_{g_0}^{-1}w\|_{C^{4,\lambda}(M^n)}\leq C \|w\|_{C^{\lambda}(M^n)},$$
where $C$ depends only on $n, \lambda$. Then one has
\begin{eqnarray*}
\|P_{g_0}^{-1}(f|u|^{\frac{n+4}{n-4}})(t)\|_{C^{4,\lambda}(M^n)}&\leq& C \|(f|u|^{\frac{n+4}{n-4}})(t)\|_{C^\lambda(M^n)}\\
&\leq& C \|u(t)\|_{C^0(M^n)}^{\frac{8}{n-4}}\|u(t)\|_{C^\lambda(M^n)},
\end{eqnarray*}
and then
\begin{eqnarray*}
&&\|L(u)(\cdot, t)-u_0(\cdot)\|_{C^{4,\lambda}(M^n)}\\
&\leq& T\|u(t)\|_{X}+C\int_0^t \|u(\tau)\|_{C^0(M^n)}^{\frac{8}{n-4}}\|u(\tau)\|_{C^\lambda(M^n)}d\tau\\
&\leq& CT(\|u\|_X+\|u\|_X^{\frac{n+4}{n-4}})\\
&\leq& CT(1+(\delta+\|u_0\|_{C^{4,\lambda}(M^n)})^{\frac{n+4}{n-4}}).
\end{eqnarray*}
Thus we have 
$$\|L(u)-u_0\|_X \leq C(f,n,\delta,\lambda) T.$$
By choosing $T>0$ sufficiently small, it is not hard to verify that the map $L$ is a contraction on $(X_\delta,\rho)$. Then, by the contraction mapping theorem, there exists a unique fixed point of $L$. Thus, the local well-possedness of the modified flow problem \eqref{gm_Q_flow_modified} is established. 
\end{proof}

\section{Positivity of $\mathbf{u(t,x)}$ and energy estimates}\label{sect3}

\indent \indent We first need to show that the positivity of $u$ is preserved along the flow. From now on, we impose some restrictions on initial data, that is $u_0 \in C^\infty_\ast$,where
$$C_\ast^\infty=\left\{w \in
C^\infty(M^n);w>0, P_{g_0}w\geq 0\right \}.$$
 It is easy to know $C_\ast^\infty \neq \emptyset$ since $1 \in C_\ast^\infty$ in view of the fact $P_{g_0}1=\tfrac{n-4}{2}Q_{g_0}\geq 0$ due to Gursky-Malchiodi's semi-positivity hypotheses \eqref{gm_pos} or Hang-Yang's \eqref{hy_pos}. 
\begin{lemma}\label{positivity_pres}
Let $u$ be a $C^4$-solution to the nonlocal flow equation $\eqref{gm_Q_flow_orig}$  with $u(0,x)=u_0(x) \in C^\infty_\ast$, then for all $0 \leq t \leq T$,  there hold
$$u(x,t)>0 \hbox{~~and~~} u \in C_\ast^\infty.$$
\end{lemma}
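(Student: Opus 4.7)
The plan is a coupled bootstrap on the two properties $u(\cdot,t)>0$ and $P_{g_0}u(\cdot,t)\geq 0$, both of which must be propagated in tandem because the coupling constant $\alpha(t)$ in the flow has sign dictated by $\int uP_{g_0}u$. First I would apply $P_{g_0}$ to the flow equation \eqref{gm_Q_flow_orig}; since $P_{g_0}$ and $\partial_t$ commute and $P_{g_0}P_{g_0}^{-1}=\mathrm{id}$, the function $v(t,x):=P_{g_0}u(t,x)$ satisfies the pointwise linear ODE
\begin{equation*}
\frac{\partial v}{\partial t}=-v+\frac{n-4}{2}\,\alpha(t)\,f(x)\,|u(t,x)|^{\tfrac{n+4}{n-4}},
\end{equation*}
and Duhamel's formula yields
\begin{equation*}
P_{g_0}u(t,x)=e^{-t}P_{g_0}u_0(x)+\frac{n-4}{2}\int_0^t e^{\tau-t}\alpha(\tau)f(x)|u(\tau,x)|^{\tfrac{n+4}{n-4}}\,d\tau.
\end{equation*}
In parallel, whenever $\alpha(\tau)\geq 0$ on $[0,t]$, the remark following \eqref{constraint_factor} together with the positivity of the Green's function $G_{g_0}$ gives $P_{g_0}^{-1}(\alpha f|u|^{(n+4)/(n-4)})\geq 0$, so the flow itself implies $\partial_t u\geq -u$, hence $u(t,x)\geq e^{-t}u_0(x)$ by Gronwall.

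Next I would set up the bootstrap. Define
\begin{equation*}
T^{\ast}:=\sup\bigl\{t\in[0,T]:\,\alpha(s)\geq 0\ \text{for every }s\in[0,t]\bigr\}.
\end{equation*}
Since $u_0>0$ and $P_{g_0}u_0\geq 0$ (with $P_{g_0}u_0\not\equiv 0$ because $\ker P_{g_0}=\{0\}$ under \eqref{gm_pos} or \eqref{hy_pos}), one checks $\alpha(0)>0$, so $T^\ast>0$ by continuity of $\alpha$ in $t$ (which follows from $u\in C^0([0,T];C^{4,\lambda})$). On $[0,T^\ast)$, the two displayed bounds above give simultaneously $u\geq e^{-t}u_0>0$ and $P_{g_0}u\geq e^{-t}P_{g_0}u_0\geq 0$.

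The step I expect to be the main obstacle is ruling out $T^\ast<T$, and this is where the hypotheses \eqref{gm_pos}/\eqref{hy_pos} enter in an essential way. Suppose $T^\ast<T$. By continuity, $u(T^\ast)>0$ and $P_{g_0}u(T^\ast)\geq 0$. The delicate point is that $\alpha$ could a priori touch $0$ at $T^\ast$ and then dip below. However, $\alpha(T^\ast)=0$ would force $\int_{M^n}u(T^\ast)P_{g_0}u(T^\ast)\,d\mu_{g_0}=0$; since $u(T^\ast)>0$ pointwise, this forces $P_{g_0}u(T^\ast)\equiv 0$, and the triviality of $\ker P_{g_0}$ then forces $u(T^\ast)\equiv 0$, contradicting $u(T^\ast)>0$. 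Hence $\alpha(T^\ast)>0$, and by continuity of $\alpha$ one finds $\alpha>0$ on a neighborhood of $T^\ast$, contradicting maximality. Thus $T^\ast=T$, giving $u>0$ and $P_{g_0}u\geq 0$ on $[0,T]$.

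Finally, to conclude $u(\cdot,t)\in C^{\infty}_{\ast}$ I would upgrade regularity: the $C^{4,\lambda}$-solution in Lemma \ref{local_existence}, together with $u>0$, $u_0\in C^\infty$, and elliptic Schauder estimates applied iteratively to $P_{g_0}^{-1}(\alpha f u^{(n+4)/(n-4)})$, gives $u(\cdot,t)\in C^{\infty}(M^n)$; combined with $u>0$ and $P_{g_0}u\geq 0$ already proved, this places $u(\cdot,t)$ in $C^{\infty}_{\ast}$ for every $t\in[0,T]$.
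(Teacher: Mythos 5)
Your proof is correct, and its core mechanism agrees with the paper's: apply $P_{g_0}$ to the flow, observe that $v=P_{g_0}u$ satisfies the linear ODE $\partial_t v=-v+\tfrac{n-4}{2}\alpha f|u|^{(n+4)/(n-4)}$, and propagate nonnegativity from $v(0)=P_{g_0}u_0\geq 0$. Where you deviate is the continuation/bootstrap argument for the sign of $\alpha$, and that step is actually unnecessary. Under either \eqref{gm_pos} or \eqref{hy_pos} the paper shows in Section 2 that $\langle v,P_{g_0}v\rangle_{g_0}^{1/2}$ is an equivalent $H^{2}$ norm, equivalently $q(g_0)>0$, so $\int_{M^n}uP_{g_0}u\,d\mu_{g_0}\geq 0$ for every $u$ without any assumption on $u$; combined with $f>0$ this gives $\alpha(t)\geq 0$ for all $t$ for free. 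The paper therefore simply writes $\partial_t P_{g_0}u\geq -P_{g_0}u$, integrates to obtain $P_{g_0}u(t)\geq e^{-t}P_{g_0}u_0\geq 0$ (not identically zero, since $\ker P_{g_0}=0$), and then invokes the strong maximum principle for $P_{g_0}$ (GM Theorem~2.2 or HY Proposition~3.1) to conclude $u(t)>0$. Your alternative of deriving $u\geq e^{-t}u_0>0$ from the positivity of the Green's function and Gronwall is a legitimate equivalent route, since Green's function positivity and the strong maximum principle are two faces of the same coercivity structure; and your final regularity bootstrap, which the paper leaves implicit, is a sound addition. But you should note that the open-and-closed argument on $T^{\ast}$ is superfluous: the coercivity of $P_{g_0}$ makes $\alpha\geq 0$ a priori, not a property that needs to be propagated.
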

\begin{proof}
A direct computation yields
\begin{eqnarray*}
\frac{\partial }{\partial t}P_{g_0}(u)&=&P_{g_0}(u_t)\\
&=&-P_{g_0}u+\frac{n-4}{2}\alpha f |u|^{n+4 \over n-4}\\
&\geq& -P_{g_0}u.
\end{eqnarray*}
Then, we have
$$P_{g_0}u(t,x) \geq e^{-t}P_{g_0}(u_0)\geq 0,$$
in view of $u_0 \in C^\infty_\ast$. Thus, under either Gursky-Malchiodi's \eqref{gm_pos} or Hang-Yang's \eqref{hy_pos}, the strong maximum principle for $P_{g_0}$ (cf. \cite{gur_mal} Theorem 2.2 or \cite{hy2} Proposition 3.1, respectively) gives
$$\frac{n-4}{2}Q_g u^{n+4 \over n-4}=P_{g_0}u(t,x)\geq 0,~~u(t,x)>0 \hbox{~~for all~~} (x,t) \in M^n \times [0,T].$$
The above fact also implies $u\in C_\ast^\infty$. This completes the proof.
\end{proof}

Consequently, the flow problem \eqref{gm_Q_flow_orig} turns to
\begin{eqnarray}\label{gm_Q_flow}
\left\{
\begin{array}{ll}
\frac{\partial u}{\partial t}=-u+\frac{n-4}{2}P_{g_0}^{-1}\big(\alpha f u^{\tfrac{n+4}{n-4}}\big);\\
u(0,x)=u_0 \in C^\infty_\ast;
\end{array}
\right.
\end{eqnarray}
where $\alpha(t)$ is given in \eqref{constraint_factor}. For brevity, let
$$\varphi=-u+\tfrac{n-4}{2}P_{g_0}^{-1}(\alpha f u^{n+4 \over n-4}),$$
and then
\begin{equation}\label{heat_P_g_0_u}
\frac{\partial}{\partial t}P_{g_0}u=-P_{g_0}u+\frac{n-4}{2}\alpha f u^{n+4 \over n-4}=P_{g_0}\varphi.
\end{equation}

From now on, denote by $g(t)=u(t)^{4 \over n-2}g_0$ the flow metric and $d\mu_g=u(t)^{2n \over n-4}d\mu_{g_0}$ the volume form of the flow metric, then $Q$-curvature equation gives
\begin{equation}\label{Q_curvature_eqn}
P_{g_0} u=\frac{n-4}{2}Q u^{n+4 \over n-4} \hbox{~~on~~} M^n,
\end{equation}
where $Q=Q_g$ is the $Q$-curvature of the flow metric $g(t)$. Define the energy functionals
\begin{eqnarray*}
 E[u]&=&\frac{n-4}{2}\int_{M^n} Q_{g} d\mu_{g}=\int_{M \sp n}u P_{g_0}( u) d \mu_{g_0}\\
 &=&\int_{M^n}\Big[(\Delta_{g_0}u)^2+a_nR_{g_0} |\nabla u|_{g_0}^2+b_n \hbox{Ric}_{g_0}(\nabla u, \nabla u)+{n-4 \over 2}Q_{g_0}u^2\Big]d\mu_{g_0}
 \end{eqnarray*}
 and
 $$E_f[u]={E[u]\over \big(\int_{M^n}f u^{2n \over n-4}d\mu_{g_0}\big)^{(n-4)/n}}.$$
By \eqref{constraint_factor} and \eqref{Q_curvature_eqn}, we have
$$E[u(t)]=\frac{n-4}{2}\int_{M^n}Q d\mu_g$$
and
\begin{equation}\label{alpha_new}
0=\int_{M^n}(\alpha f-Q)d\mu_g=\frac{2}{n-4}\int_{M^n}uP_{g_0}\varphi d\mu_{g_0}.
\end{equation}

 Along this flow, the energy $E[u(t)]$ is preserved for all time $t \geq 0$.
\begin{lemma}\label{cons_energy}
Along the nonlocal flow \eqref{gm_Q_flow}, the energy $E[u(t)]$ is conserved for any time $t \geq 0$.
\end{lemma}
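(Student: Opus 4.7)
The plan is a direct one-line differentiation: compute $\tfrac{d}{dt}E[u(t)]$ using the self-adjointness of $P_{g_0}$ and verify that the resulting quantity vanishes thanks to the way the constraint function $\alpha(t)$ has been designed.

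First I would note that, since $u(\cdot,t) \in C^\infty_\ast \subset C^\infty(M^n)$ by Lemma \ref{positivity_pres} and $u_t = \varphi$ is at least $C^4$ by Lemma \ref{local_existence} together with Schauder regularity applied to the fixed-point identity, we may freely differentiate under the integral sign and freely integrate by parts. Using the symmetry $\int_{M^n} v P_{g_0} w\,d\mu_{g_0} = \int_{M^n} w P_{g_0} v\,d\mu_{g_0}$, I would write
\begin{equation*}
\frac{d}{dt} E[u(t)] = \frac{d}{dt}\int_{M^n} u\,P_{g_0} u \, d\mu_{g_0} = 2\int_{M^n} u_t \, P_{g_0} u \, d\mu_{g_0} = 2\int_{M^n} u\, P_{g_0}\varphi \, d\mu_{g_0}.
\end{equation*}

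Next I would identify the right-hand side as zero. The cleanest way is to invoke the identity \eqref{alpha_new} already derived in the text, which asserts exactly that $\int_{M^n} u P_{g_0}\varphi\,d\mu_{g_0}=0$ as a restatement of the defining relation \eqref{constraint_factor} for $\alpha(t)$. Alternatively, one can substitute $P_{g_0}\varphi = -P_{g_0}u + \tfrac{n-4}{2}\alpha f u^{(n+4)/(n-4)}$ from \eqref{heat_P_g_0_u} and check directly that
\begin{equation*}
\int_{M^n} u P_{g_0}\varphi\,d\mu_{g_0} = -\int_{M^n} u P_{g_0}u\,d\mu_{g_0} + \frac{n-4}{2}\alpha(t)\int_{M^n} f u^{2n/(n-4)}\,d\mu_{g_0} = 0,
\end{equation*}
where the last equality is precisely the definition \eqref{constraint_factor} of $\alpha(t)$. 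Either route gives $\tfrac{d}{dt}E[u(t)]\equiv 0$, hence $E[u(t)] = E[u_0]$ for all $t$ in the interval of existence.

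There is no real obstacle here; the only thing to double-check is the legitimacy of differentiating under the integral sign, but this is immediate from the regularity furnished by Lemma \ref{local_existence} and the smoothness of $u_0$. The conservation of $E$ is, by construction, the signature property of the nonlocal normalization: $\alpha(t)$ is chosen precisely so that the $L^2_{g_0}$-projection of $u_t$ on $P_{g_0}u$ vanishes.
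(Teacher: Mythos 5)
Your proposal is correct and follows essentially the same route as the paper: differentiate $E[u(t)]$, use self-adjointness of $P_{g_0}$ to move the operator onto $u_t$, substitute the flow equation, and observe that the resulting expression vanishes precisely by the defining relation \eqref{constraint_factor} for $\alpha(t)$. The alternative phrasing via \eqref{alpha_new} is just a restatement of the same identity, so there is no substantive difference.
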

\begin{proof} By the flow equation \eqref{gm_Q_flow} and \eqref{constraint_factor}, we obtain
\begin{eqnarray*}
\frac{d}{dt}\int_{M^n}uP_{g_0}u d\mu_{g_0}&=&2\int_{M^n}u P_{g_0}(u_t)d\mu_{g_0}\\
&=&2\int_{M^n}uP_{g_0}\Big(-u+\tfrac{n-4}{2}P_{g_0}^{-1}(\alpha f u^{\tfrac{n+4}{n-4}})\Big)d\mu_{g_0}\\
&=&-2 \Big[\int_{M^n}uP_{g_0}u d\mu_{g_0}-\tfrac{n-4}{2}\alpha(t) \int_{M^n}f u^{2n \over n-4}d\mu_{g_0}\Big]\\
&=&0,
\end{eqnarray*}
which implies the desired assertion.
\end{proof}

From Remark \ref{rm1}, the flow equation \eqref{gm_Q_flow} is equivalent to
\begin{equation}\label{gm_Q_gradient_flow}
u_t=\frac{n-4}{2}P_g^{-1}(\alpha f-Q_g)u,
\end{equation}
that is,
$$\frac{\partial}{\partial t}g=2P_g^{-1}(\alpha f-Q_g) g.$$
Since $P_g$ is self-adjoint and positive, we can define $H^{2}(M^n,g)$ inner product by
$$\langle \eta,\zeta \rangle_g=\int_{M^n}\eta P_g \zeta d\mu_g,$$
which induces the $H^2$-norm $\|\cdot \|_{H^2(M^n,g)}$. In this sense, the nonlocal $Q$-curvature flow \eqref{gm_Q_flow} is a negative gradient flow of $E_f[u]$ in the Hilbert space $H^2(M^n,g)$. Now we pause for a while to give some explanations for the nonlocal $Q$-curvature flow. Up to a positive constant, regard $E_f[u]$ as a functional of the metric $g$:
$$\mathcal{Q}_f (g)=\frac{\int_M Q_g d\mu_g}{\Big(\int_M f d\mu_g\Big)^{n-4 \over n}}$$
Set
$$g_\epsilon=\phi_\epsilon g$$
satisfying
$$\phi_\epsilon\Big|_{\epsilon=0}=1 \hbox{~~and~~} \frac{d \phi_\epsilon}{d\epsilon}\Big|_{\epsilon=0}=\phi \in C^\infty(M).$$
Notice that $P_g$ is invertible under hypotheses \eqref{gm_pos} or \eqref{hy_pos}, we obtain
\begin{equation*}
\begin{split}
\mathcal{Q}'_f (g)[\phi]=&\frac{d}{d\epsilon}\Big|_{\epsilon=0}\frac{\int_M Q_{\phi_\epsilon g} d\mu_{\phi_\epsilon g}}{\Big(\int_M f \phi_\epsilon^{n \over 2}d\mu_g\Big)^{n-4 \over n}}\\
=&\frac{2}{n-4}\frac{d}{d\epsilon}\Big|_{\epsilon=0}\frac{\int_M \phi_\epsilon^{n-4 \over 4}P_g( \phi_\epsilon^{n-4 \over 4})d\mu_g}{\Big(\int_M f \phi_\epsilon^{n \over 2}d\mu_g\Big)^{n-4 \over n}}\\
=&\frac{\int_M \phi P_g(1) d\mu_g}{\Big(\int_M f d\mu_g\Big)^{n-4 \over n}}-\frac{n-4}{2}\frac{\int_M Q_g d\mu_g}{\Big(\int_M f d\mu_g\Big)^{{n-4 \over n}+1}\int_M f \phi d\mu_g}\\
=&\frac{n-4}{2}\Big(\int_M f \phi_\epsilon^{n \over 2}d\mu_g\Big)^{4-n \over n}\Big[\int_M \phi (Q_g-\alpha f)d\mu_g\Big]\\
=&\frac{n-4}{2}\Big(\int_M f \phi_\epsilon^{n \over 2}d\mu_g\Big)^{4-n \over n}\langle\phi, P_g^{-1} (Q_g-\alpha f)\rangle_g.
\end{split}
\end{equation*}

\begin{lemma}\label{energy_non-increasing}
Along the nonlocal flow \eqref{gm_Q_flow}, the energy $E_f[u(t)]$ is non-increasing for all time $t \geq 0$.
\end{lemma}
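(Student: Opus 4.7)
The plan is to differentiate $E_f[u(t)]$ directly, exploiting the conservation of the numerator $E[u(t)]$ from Lemma \ref{cons_energy}. Writing $A(t):=\int_{M^n}fu^{2n/(n-4)}d\mu_{g_0}$ for the base of the denominator, the conservation of $E[u]$ immediately gives
$$\frac{d}{dt}E_f[u(t)]=-\frac{n-4}{n}\,\frac{E[u]}{A(t)^{(2n-4)/n}}\,\frac{dA}{dt}.$$

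Next I would compute $\tfrac{dA}{dt}=\tfrac{2n}{n-4}\int_{M^n}fu^{(n+4)/(n-4)}u_t\,d\mu_{g_0}$. The key manipulation is to apply $P_{g_0}$ to both sides of the flow equation \eqref{gm_Q_flow} to obtain the identity $\alpha f u^{(n+4)/(n-4)}=\tfrac{2}{n-4}P_{g_0}(u+u_t)$, then divide by $\alpha$ and exploit self-adjointness of $P_{g_0}$:
$$\int_{M^n}fu^{(n+4)/(n-4)}u_t\,d\mu_{g_0}=\frac{2}{(n-4)\alpha(t)}\Big[\int_{M^n}u_tP_{g_0}u_t\,d\mu_{g_0}+\frac12\frac{d}{dt}\!\int_{M^n}uP_{g_0}u\,d\mu_{g_0}\Big],$$
and the last term drops out by Lemma \ref{cons_energy}. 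Using the definition \eqref{constraint_factor} of $\alpha(t)$ to simplify the resulting factor $E[u]/(\alpha(t)A(t))=(n-4)/2$, the whole expression collapses to the manifestly nonpositive formula
$$\frac{d}{dt}E_f[u(t)]=-\frac{2}{A(t)^{(n-4)/n}}\int_{M^n}u_tP_{g_0}u_t\,d\mu_{g_0}\le 0,$$
where nonnegativity of the integral is guaranteed by the positive-definiteness of $P_{g_0}$ established at the start of Section \ref{sect2} under either \eqref{gm_pos} or \eqref{hy_pos}.

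The only real obstacle is bookkeeping; once the rewriting $\alpha fu^{(n+4)/(n-4)}=\tfrac{2}{n-4}P_{g_0}(u+u_t)$ is in place, everything telescopes. An equivalent route would be to invoke the variational formula for $\mathcal{Q}'_f(g)[\phi]$ derived immediately before the lemma with direction $\phi=2P_g^{-1}(\alpha f-Q_g)$ read off from the conformal form \eqref{gm_Q_gradient_flow} of the flow, which exhibits the same nonpositive derivative via the gradient-flow structure. I would nevertheless prefer the explicit formula above, because it produces the quantitative identity for $\int_{M^n}u_tP_{g_0}u_t\,d\mu_{g_0}$ that will be needed for the asymptotic analysis announced in Section \ref{sect5}.
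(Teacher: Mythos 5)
Your proof is correct, and the final formula
$$\frac{d}{dt}E_f[u(t)]=-\frac{2}{A(t)^{(n-4)/n}}\int_{M^n}u_t\,P_{g_0}u_t\,d\mu_{g_0}$$
is indeed equivalent to the one the paper obtains. You can check the agreement via the conformal covariance of $P_{g_0}$: with $u_t=\tfrac{n-4}{2}P_g^{-1}(\alpha f-Q)\,u$ from \eqref{gm_Q_gradient_flow}, one has $\int u_t P_{g_0}u_t\,d\mu_{g_0}=\int (u_t/u)P_g(u_t/u)\,d\mu_g=\tfrac{(n-4)^2}{4}\int(\alpha f-Q)P_g^{-1}(\alpha f-Q)\,d\mu_g$, which is exactly the paper's $\|P_g^{-1}(\alpha f-Q)\|^2_{H^2(M^n,g)}$ expression. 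The route is genuinely different, though. The paper passes to the conformal form of the flow, expands $dE_f/dt$ against $\alpha f$, and then uses the orthogonality $\int_{M^n}Q\,P_g^{-1}(\alpha f-Q)\,d\mu_g=0$ (which is \eqref{alpha_new}) to replace $\alpha f$ by $\alpha f-Q$, landing on a manifestly nonnegative quadratic form in the $H^2(M^n,g)$ metric. You instead stay in the background metric $g_0$, apply $P_{g_0}$ to the flow equation, and eliminate the cross term via $\int u_tP_{g_0}u\,d\mu_{g_0}=0$ (Lemma \ref{cons_energy}). Both cancellations are in fact the same fact — the normalization built into the definition \eqref{constraint_factor} of $\alpha$ — read in the $g_0$ picture versus the $g$ picture. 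Your version has the advantage of being slightly more elementary (no conformal change needed) and of directly producing $F_2(t)=\int_{M^n}\varphi\,P_{g_0}\varphi\,d\mu_{g_0}$ (recall $\varphi=u_t$), which is exactly the quantity used later in Lemmas \ref{F_2}--\ref{asym_F_2}; the paper's version makes the negative-gradient-flow interpretation in $H^2(M^n,g)$ visible at a glance. One very minor housekeeping point: when you divide by $\alpha(t)$ you should note $\alpha(t)>0$, which follows immediately from $E[u]>0$ (by coercivity of $P_{g_0}$ and $u>0$) and $A(t)>0$; the paper records this later in Lemma \ref{bd_alpha}, but it is elementary.
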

\begin{proof}
By \eqref{gm_Q_gradient_flow} and \eqref{constraint_factor}, we compute
\begin{eqnarray*}
\frac{d}{dt}E_f[u]&=&-(n-4)\Big(\int_{M^n}fu^{\tfrac{2n}{n-4}}d\mu_{g_0}\Big)^{\tfrac{4-n}{n}}\int_{M^n}\alpha fu^{-1}u_t d\mu_g\\
&=&-\frac{(n-4)^2}{2}\Big(\int_{M^n}fu^{\tfrac{2n}{n-4}}d\mu_{g_0}\Big)^{\tfrac{4-n}{n}}\int_{M^n}\alpha f P_g^{-1}(\alpha f-Q)d\mu_g.
\end{eqnarray*}
From \eqref{rm1}, \eqref{Q_curvature_eqn} and \eqref{constraint_factor}, as well as from \eqref{alpha_new}, we notice that
\begin{eqnarray*}
\int_{M^n} Q P_g^{-1}(\alpha f-Q)d\mu_g&=&\frac{2}{n-4}\int_{M^n}P_{g_0}(u) uP_g^{-1}(\alpha f-Q)d\mu_{g_0}\\
&=& \frac{2}{n-4}\int_{M^n}P_{g_0}(u)P_{g_0}^{-1}\Big((\alpha f-Q)u^{\tfrac{n+4}{n-4}}\Big)d\mu_{g_0}\\
&=&\frac{2}{n-4}\int_{M^n}(\alpha f-Q)d\mu_g=0.
\end{eqnarray*}
Thus, we obtain
\begin{eqnarray*}
\frac{d}{dt}E_f[u]&=&-\frac{(n-4)^2}{2}\Big(\int_{M^n}fu^{\tfrac{2n}{n-4}}d\mu_{g_0}\Big)^{\tfrac{4-n}{n}}\int_{M^n}(\alpha f-Q) P_g^{-1}(\alpha f-Q)d\mu_g\\
&=&-\frac{(n-4)^2}{2}\Big(\int_{M^n}fu^{\tfrac{2n}{n-4}}d\mu_{g_0}\Big)^{\tfrac{4-n}{n}}\|P_g^{-1}(\alpha f-Q)\|_{H^2(M^n,g)}^2 \leq 0.
\end{eqnarray*}
This completes the proof.
\end{proof}

\subsection{Some elementary estimates}

\begin{lemma}\label{vol_bdabove}
The conformal volume of the flow metric is uniformly bounded above, that is, for all time $t \geq 0$, there exists a positive constant $C_1$ depending on $n, q(g_0)$ and initial energy $E[u_0]$ such that
$$\int_{M^n} u^{2n \over n-4}d\mu_{g_0} \leq C_1.$$
Moreover, there exists a positive constant $C_2$ depending on $\max_{M^n}f, n, q(g_0)$ and initial energy $E[u_0]$, such that
$$\int_{M^n} f u^{2n \over n-4} d\mu_{g_0} \leq C_2.$$
\end{lemma}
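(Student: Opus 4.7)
The plan is to combine energy conservation along the flow (Lemma \ref{cons_energy}) with the Paneitz--Sobolev inequality furnished by the positivity of $q(g_0)$. By Lemma \ref{positivity_pres} we have $u(t,\cdot)>0$ for all $t\ge0$, so in particular $u(t,\cdot) \in H^2(M^n,g_0)\setminus\{0\}$, which makes it a valid test function for the infimum defining $q(g_0)$.

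Concretely, I would first write
\[
\int_{M^n} u(t) P_{g_0} u(t)\, d\mu_{g_0} \;=\; E[u(t)] \;=\; E[u_0]
\]
using Lemma \ref{cons_energy}. Next, using $u(t,\cdot)$ as a test function in the variational definition of $q(g_0)$ gives
\[
q(g_0)\Bigl(\int_{M^n} u^{\tfrac{2n}{n-4}}\, d\mu_{g_0}\Bigr)^{\tfrac{n-4}{n}} \;\leq\; \int_{M^n} u P_{g_0} u\, d\mu_{g_0} \;=\; E[u_0].
\]
Solving for the volume integral, together with the fact (established in Section 2) that $q(g_0)>0$ under either hypothesis \eqref{gm_pos} or \eqref{hy_pos}, yields the first estimate with
\[
C_1 \;=\; \Bigl(\frac{E[u_0]}{q(g_0)}\Bigr)^{\tfrac{n}{n-4}}.
\]

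The second estimate is then immediate: bound $f$ pointwise by $\max_{M^n} f$ on the closed manifold $M^n$ and factor it out of the integral, so
\[
\int_{M^n} f u^{\tfrac{2n}{n-4}}\, d\mu_{g_0} \;\leq\; \bigl(\max_{M^n} f\bigr) C_1 \;=:\; C_2.
\]
There is no real obstacle here: the lemma is essentially a packaging of conservation of $E[u]$ with the Paneitz--Sobolev inequality. The only subtlety worth mentioning is that the estimate relies crucially on the positivity of $q(g_0)$, which in the Gursky--Malchiodi setting \eqref{gm_pos} comes from Proposition 2.3 of \cite{gur_mal} and in the Hang--Yang setting \eqref{hy_pos} is assumed directly; this is why both hypotheses are compatible with the same argument.
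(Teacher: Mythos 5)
Your proof is correct and follows essentially the same argument as the paper: conservation of $E[u(t)]$ (Lemma \ref{cons_energy}), the definition of the Paneitz--Sobolev constant with $q(g_0)>0$, and bounding $f$ by its maximum. The extra remarks on why $u(t)$ is an admissible test function and on the source of $q(g_0)>0$ are harmless amplifications, not a different route.
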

\begin{proof}
By the definition of the Paneitz-Sobolev constant $q(g_0)$ and Lemma \ref{cons_energy}, one obtains
$$q(g_0)\Big(\int_{M^n} u^{2n \over n-4} d\mu_{g_0}\Big)^{n-4 \over n} \leq E[u(t)]=E[u_0].$$
Thus, it yields
$$\int_{M^n}u^{2n \over n-4}d\mu_{g_0} \leq  \Big(\frac{E[u_0]}{q(g_0)}\Big)^{n \over n-4} := C_1.$$
Moreover, we get
$$\int_{M^n}f u^{2n \over n-4}d\mu_{g_0} \leq (\max_{M^n}f) \int_{M^n}u^{2n \over n-4}d\mu_{g_0}\leq (\max_{M^n}f) C_1 := C_2.$$
This concludes the proof.
\end{proof}

Indeed, we manage to show that $\alpha(t)$ is uniformly bounded below and above, as well as is the conformal volume $\int_{M^n}u(t)^{\frac{2n}{n-4}}d\mu_{g_0}$.
\begin{lemma}\label{bd_alpha}
 There exist two uniform positive  constants $C_3=C_3(n,\max_{M^n}f, E[u_0])$ and $C_4=C_4(n, E[u_0])$ such that
$$0<C_3 \leq \alpha(t) \leq C_4.$$
\end{lemma}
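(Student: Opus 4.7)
My plan is to combine the two conservation/monotonicity facts already established for the flow. By Lemma \ref{cons_energy}, $E[u(t)]=E[u_0]$ for all $t\geq 0$, so the defining formula \eqref{constraint_factor} reduces to
$$\alpha(t)=\frac{2}{n-4}\cdot\frac{E[u_0]}{\int_{M^n}fu^{2n/(n-4)}\,d\mu_{g_0}}.$$
Consequently, proving the lemma amounts to bounding the denominator $V_f(t):=\int_{M^n}fu^{2n/(n-4)}\,d\mu_{g_0}$ uniformly from above and below in $t$.

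The lower bound on $\alpha$ is the easy direction. Lemma \ref{vol_bdabove} already supplies $V_f(t)\leq C_2$ for a constant depending only on $n$, $\max_{M^n}f$, $q(g_0)$ and $E[u_0]$, so we immediately obtain
$$\alpha(t)\geq \frac{2E[u_0]}{(n-4)C_2}=:C_3.$$

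For the upper bound on $\alpha$ I would invoke the monotonicity of $E_f$ from Lemma \ref{energy_non-increasing}. Since $E_f[u(t)]\leq E_f[u_0]$ while $E[u(t)]$ is conserved, rearranging the definition of $E_f$ gives
$$V_f(t)^{(n-4)/n}=\frac{E[u(t)]}{E_f[u(t)]}\geq \frac{E[u_0]}{E_f[u_0]}=V_f(0)^{(n-4)/n},$$
hence $V_f(t)\geq V_f(0)=\int_{M^n}fu_0^{2n/(n-4)}\,d\mu_{g_0}>0$, the strict positivity being a consequence of $f,u_0>0$ on the compact manifold $M^n$. Substituting back produces
$$\alpha(t)\leq \frac{2E[u_0]}{(n-4)V_f(0)}=:C_4.$$

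The argument is essentially a bookkeeping consequence of the gradient-flow structure: conservation of $E$ turns $\alpha$ into a ratio with frozen numerator, and monotonicity of $E_f$ prevents the denominator from crashing to zero. I do not foresee any serious obstacle; the only subtlety worth flagging is that $C_4$ actually depends on the initial datum through $V_f(0)$ (equivalently, through $E_f[u_0]$) and not merely on $n$ and $E[u_0]$ as stated, so one either accepts that enlarged dependence or absorbs it into a prior normalization of $u_0$.
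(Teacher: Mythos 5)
Your argument is correct, and your bound $C_4=\alpha(0)$ agrees with the paper's. The lower bound is identical in both: conservation of $E$ (Lemma \ref{cons_energy}) rewrites $\alpha(t)$ as $\tfrac{2}{n-4}E[u_0]/V_f(t)$, and Lemma \ref{vol_bdabove} caps $V_f(t)$ from above. For the upper bound, however, the two routes differ. The paper appeals to the identity \eqref{alpha_t}, which expresses $\alpha_t$ in terms of $\int \varphi P_{g_0}\varphi\,d\mu_{g_0}\geq 0$ and yields $\alpha_t\leq 0$ directly; this is a forward reference, since \eqref{alpha_t} is only established inside the proof of the subsequent Lemma \ref{F_2}. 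You instead combine the monotonicity of $E_f$ (Lemma \ref{energy_non-increasing}) with conservation of $E$: since $V_f(t)^{(n-4)/n}=E[u_0]/E_f[u(t)]$ and $E_f$ is non-increasing, $V_f$ is non-decreasing, hence $\alpha$ is non-increasing. The two arguments are equivalent in substance — non-increase of $\alpha$ under frozen $E$ is exactly non-decrease of $V_f$, which is exactly non-increase of $E_f$ — but yours respects the logical order of the lemmas as written, which is a genuine if minor improvement. Your closing remark is also correct: $C_4=\alpha(0)$ depends on $u_0$ through $V_f(0)$ (equivalently $E_f[u_0]$), not merely through $n$ and $E[u_0]$ as the lemma's statement suggests; this is a small imprecision in the paper that no derivation can repair, and you do well to flag it.
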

\begin{proof}
By the expression \eqref{constraint_factor} of $\alpha(t)$ and the conservation of energy $E[u(t)]$ in view of Lemma \ref{cons_energy} , we obtain 
$$\alpha(t) \geq \frac{E[u_0]}{C_2} := C_3,$$
where $C_2$ is the positive constant given in Lemma \ref{vol_bdabove}. On the other hand, from \eqref{alpha_t} one asserts that $\alpha_t \leq 0$, which implies $\alpha(t) \leq \alpha(0) := C_4.$ This concludes the proof.
\end{proof}

\begin{lemma} \label{F_2}
For any fixed time $T>0$, there exists a uniform constant $C$ depending on $n, q(g_0), \max_{M^n}f$ and initial energy $E[u_0]$ such that 
$$\Big(\frac{n-4}{2}\Big)^2\int_0^T\int_{M^n}(\alpha f-Q) P_g^{-1}(\alpha f-Q)d\mu_g dt=\int_0^T \int_{M^n} \varphi P_{g_0}\varphi d\mu_{g_0}dt\leq C.$$
\end{lemma}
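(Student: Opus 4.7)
The plan is to read off the desired bound from the energy dissipation formula of Lemma \ref{energy_non-increasing}. First I would record the elementary identity
\begin{equation*}
\int_{M^n}\varphi P_{g_0}\varphi\,d\mu_{g_0}=\Big(\tfrac{n-4}{2}\Big)^2\int_{M^n}(\alpha f-Q)P_g^{-1}(\alpha f-Q)\,d\mu_g,
\end{equation*}
which already appears as the claimed equality in the statement. To verify it, recall from the derivation following \eqref{gm_Q_flow} that $\varphi=u_t$, and using \eqref{gm_Q_gradient_flow} together with the conformal covariance $P_{g_0}^{-1}(u^{(n+4)/(n-4)}\,\cdot\,)=u\,P_g^{-1}$ from Remark \ref{rm1}, we get $\varphi=\frac{n-4}{2}\,u\,P_g^{-1}(\alpha f-Q)$. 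Setting $\psi=\frac{n-4}{2}P_g^{-1}(\alpha f-Q)$, the conformal invariance \eqref{conformal-invariant} yields $P_{g_0}(u\psi)=u^{(n+4)/(n-4)}P_g\psi$, so $\int\varphi P_{g_0}\varphi\,d\mu_{g_0}=\int\psi P_g\psi\,u^{2n/(n-4)}d\mu_{g_0}=\int\psi P_g\psi\,d\mu_g$, which is exactly the right-hand side above.

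Next I would integrate the dissipation identity proved in Lemma \ref{energy_non-increasing} over $[0,T]$, giving
\begin{equation*}
E_f[u_0]-E_f[u(T)]=\frac{(n-4)^2}{2}\int_0^T\Big(\int_{M^n}fu^{\tfrac{2n}{n-4}}d\mu_{g_0}\Big)^{\tfrac{4-n}{n}}\!\!\int_{M^n}(\alpha f-Q)P_g^{-1}(\alpha f-Q)\,d\mu_g\,dt.
\end{equation*}
By Lemma \ref{vol_bdabove} we have $\int_{M^n}fu^{2n/(n-4)}d\mu_{g_0}\le C_2$, and since $n\ge 5$ the exponent $(4-n)/n$ is negative, so the weight satisfies $\big(\int_{M^n}fu^{2n/(n-4)}d\mu_{g_0}\big)^{(4-n)/n}\ge C_2^{(4-n)/n}>0$ uniformly in $t$. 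Pulling this constant out in front of the time integral gives
\begin{equation*}
\frac{(n-4)^2}{2}\,C_2^{\tfrac{4-n}{n}}\int_0^T\int_{M^n}(\alpha f-Q)P_g^{-1}(\alpha f-Q)\,d\mu_g\,dt\le E_f[u_0]-E_f[u(T)].
\end{equation*}

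Finally, since $E_f[u(\cdot)]$ is non-increasing by Lemma \ref{energy_non-increasing}, the right-hand side is at most $E_f[u_0]$, and an explicit upper bound for $E_f[u_0]$ in terms of $E[u_0]$, $\max_{M^n}f$, and $q(g_0)$ can be written using the conserved energy and the Paneitz–Sobolev inequality. Combining this with the identity in the first paragraph yields the desired bound, with $C$ depending only on $n$, $q(g_0)$, $\max_{M^n}f$ and $E[u_0]$ (and in particular independent of $T$). I do not expect a serious obstacle here: everything follows by integrating and bounding already-proved estimates, the only thing to be careful about being the sign of the exponent $(4-n)/n$ so that the volume-weight is bounded below rather than above.
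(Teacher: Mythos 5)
Your argument is correct, and it takes a route that is genuinely different in form from the paper's, though closely related in substance. The paper derives the bound by computing
$0=\frac{d}{dt}\int_{M^n}\alpha f u^{2n/(n-4)}\,d\mu_{g_0}$ directly, which yields the ODE
$\alpha_t/\alpha=-\frac{2n}{(n-4)E[u_0]}F_2(t)$; integrating that over $[0,T]$ gives
$\int_0^T F_2\,dt=\frac{n-4}{2n}E[u_0]\big(\log\alpha(0)-\log\alpha(T)\big)$, and the lower bound on $\alpha(T)$ from Lemma \ref{vol_bdabove} finishes. You instead integrate the dissipation identity for $E_f$ from Lemma \ref{energy_non-increasing} and bound the volume weight from below using $\int fu^{2n/(n-4)}\le C_2$ and the negative exponent $(4-n)/n$. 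Since $E$ is conserved and
$E_f[u]=E[u]^{4/n}\big(\tfrac{(n-4)\alpha}{2}\big)^{(n-4)/n}$, your use of $E_f$ and the paper's use of $\log\alpha$ are tracking the same monotone quantity, just parametrized differently; your version arguably reads more cleanly because it avoids re-deriving the $\alpha$ ODE. Your explicit verification of the identity $\int\varphi P_{g_0}\varphi\,d\mu_{g_0}=\big(\tfrac{n-4}{2}\big)^2\int(\alpha f-Q)P_g^{-1}(\alpha f-Q)\,d\mu_g$ via $\varphi=u\psi$ with $\psi=\tfrac{n-4}{2}P_g^{-1}(\alpha f-Q)$ and conformal covariance is correct and is worth having written out, since the paper treats it as implicit. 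One small caveat, which applies equally to the paper's own proof: the final constant cannot literally depend \emph{only} on $n,q(g_0),\max_{M^n}f,E[u_0]$ --- both your $E_f[u_0]$ and the paper's $\log\alpha(0)$ additionally require a lower bound on $\int_{M^n}fu_0^{2n/(n-4)}\,d\mu_{g_0}$, which is not controlled by those quantities alone; the intended reading is that the constant depends on the initial data through $E_f[u_0]$ (equivalently $\alpha(0)$), so this is a cosmetic imprecision inherited from the statement, not a gap in your argument.
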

\begin{proof}
From the flow equation \eqref{gm_Q_flow} and Lemma \ref{cons_energy}, one has
\begin{eqnarray*}
&&\frac{d}{dt}\int_{M^n}\alpha f u^{2n \over n-4}d\mu_{g_0}\\
&=&\frac{2n}{n-4}\int_{M^n}\alpha f u^{n+4 \over n-4}u_t d\mu_{g_0}+\alpha_t \int_{M^n}fu^{2n \over n-4} d\mu_{g_0}\\
&=&\frac{2n}{n-4}\int_{M^n}\alpha f u^{n+4 \over n-4}\Big[-u+\tfrac{n-4}{2}P_{g_0}^{-1}(\alpha f u^{n+4 \over n-4})\Big]d\mu_{g_0}+\frac{2}{n-4}\frac{\alpha_t}{\alpha}\int_{M^n}u P_{g_0}u d\mu_{g_0}\\
&=&\frac{2n}{ n-4}\int_{M^n}\Big[\tfrac{n-4}{2}\alpha f u^{n+4 \over n-4}P_{g_0}^{-1}(\alpha fu^{n+4 \over n-4})-\alpha f u^{2n \over n-4}\Big]d\mu_{g_0}+\frac{2}{n-4}\frac{\alpha_t}{\alpha}E[u_0].
\end{eqnarray*}
On the other hand, by the $Q$-curvature equation \eqref{Q_curvature_eqn} and the expression \eqref{alpha_new} of $\alpha(t)$, we have
\begin{eqnarray*}
&&\int_{M^n}\varphi P_{g_0} \varphi d\mu_{g_0}\\
&=&\int_{M^n}\Big[-u+P_{g_0}^{-1}(\tfrac{n-4}{2}\alpha f u^{n+4 \over n-4})\Big]P_{g_0}\Big[-u+P_{g_0}^{-1}(\tfrac{n-4}{2}\alpha f u^{n+4 \over n-4})\Big]d\mu_{g_0}\\
&=&\int_{M^n}\Big[uP_{g_0}u-\tfrac{n-4}{2}\alpha f u^{2n \over n-4}\Big]d\mu_{g_0}\\
&&+\frac{n-4}{2}\int_{M^n}\Big[\tfrac{n-4}{2}\alpha f u^{n+4 \over n-4} P_{g_0}^{-1}(\alpha f u^{n+4 \over n-4})-\alpha f u^{2n \over n-4}\Big]d\mu_{g_0}\\
&=&\frac{n-4}{2}\int_{M^n}\Big[\tfrac{n-4}{2}\alpha f u^{n+4 \over n-4} P_{g_0}^{-1}(\alpha f u^{n+4 \over n-4})-\alpha f u^{2n \over n-4}\Big]d\mu_{g_0}.
\end{eqnarray*}
Therefore, by the definition of $\alpha(t)$ , we conclude that
\begin{equation}\label{alpha_t}
0=\frac{d}{dt}\int_{M^n}\alpha f u^{2n \over n-4}d\mu_{g_0}=\frac{2}{n-4}\Big(\frac{2n}{n-4}\int_{M^n}\varphi P_{g_0}\varphi d\mu_{g_0}+\frac{\alpha_t}{\alpha}E[u_0] \Big).
\end{equation}
Integrating the above equation over $[0,T]$ to obtain
\begin{equation}\label{intest_F_2}
\int_0^T \int_{M^n} \varphi P_{g_0}\varphi d\mu_{g_0}dt=\frac{n-4}{2n }E[u_0](\log \alpha(0)-\log \alpha(T)).
\end{equation}
By Lemma \ref{vol_bdabove}, it yields 
$$\log \alpha(T) \geq \log \Big(\frac{2}{n-4}\frac{E[u_0]}{C_2}\Big)$$ 
for any fixed $T>0$. Thus, from \eqref{intest_F_2} we conclude that
$$\int_0^T \int_{M^n} \varphi P_{g_0} \varphi d\mu_{g_0} dt \leq C.$$
This completes the proof.
\end{proof}

As a byproduct of Lemma \ref{F_2}, provided that the nonlocal flow globally exists for all time, we obtain 
\begin{equation}\label{int_F_2_bd}
\int_0^\infty \int_{M^n} \varphi P_{g_0} \varphi d\mu_{g_0} dt<\infty.
\end{equation}
In particular, there exists a sequence $\{t_j\}_{j=1}^{\infty}$ with $t_j \rightarrow \infty,$
 such that
 \begin{equation}\label{pointwise}
 \int_{M \sp n}\varphi(t_j) P_{g_0}(\varphi(t_j))d
\mu_{g_0} \rightarrow 0,\text{as } j \rightarrow \infty.
\end{equation}

\section{Global existence}\label{sect4}

\begin{lemma}\label{global_exist}
Given any initial data $u_0 \in C^\infty_\ast$, there exists a smooth solution to the nonlocal $Q$-curvature flow problem \eqref{gm_Q_flow}-\eqref{constraint_factor} for all time $t \geq 0$. Moreover, for any fixed time $T>0$, there exist two positive constants $C$ and $C'$ depending on $n,T,f$ and initial data $u_0$ such that
\begin{equation}\label{bd_u}
\|u(t)\|_{C^0(M^n)}\leq C e^{C' t} \hbox{~~for~~} 0 \leq t \leq T.
\end{equation}
\end{lemma}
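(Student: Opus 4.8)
The plan is to show that the maximal time of existence $T^\ast\in(0,\infty]$ provided by Lemma~\ref{local_existence} (together with the positivity of Lemma~\ref{positivity_pres}, which keeps $u^{(n+4)/(n-4)}$ meaningful) is actually $+\infty$, by ruling out finite-time blow-up in $C^{4,\lambda}(M^n)$. Since $P_{g_0}^{-1}$ gains four derivatives, $u\mapsto u^{(n+4)/(n-4)}$ is smooth on the positive cone, and $\alpha[u]$ is a Lipschitz functional with values in $[C_3,C_4]$ by Lemmas~\ref{vol_bdabove}--\ref{bd_alpha}, the flow \eqref{gm_Q_flow} is an ODE $u_t=F(u)$ in the Banach space $C^{4,\lambda}(M^n)$; by the usual continuation criterion it suffices to bound $\sup_{[0,T]}\|u(t)\|_{C^{4,\lambda}(M^n)}$ for each finite $T<T^\ast$ by a quantity depending only on $n,f,u_0,T$. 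Throughout I would use the uniform information already at hand on $[0,T^\ast)$: conservation of $E[u(t)]=E[u_0]$ plus the equivalence of $\langle\cdot,P_{g_0}\cdot\rangle^{1/2}_{g_0}$ with $\|\cdot\|_{H^2(M^n,g_0)}$ gives $\|u(t)\|_{H^2}\le C$, hence $\|u(t)\|_{L^{2n/(n-4)}}\le C$; and $\int_{M^n}fu^{2n/(n-4)}d\mu_{g_0}\le C_2$, $\alpha(t)\in[C_3,C_4]$.

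The core is an $L^\infty$ (and then $C^{4,\lambda}$) a priori estimate. By Duhamel the flow is equivalent to
\begin{equation*}
u(t)=e^{-t}u_0+\frac{n-4}{2}\int_0^t e^{-(t-s)}P_{g_0}^{-1}\big(\alpha(s)\, f\, u(s)^{\frac{n+4}{n-4}}\big)\,ds,
\end{equation*}
so I would bootstrap the integrability of $u$ through the elliptic estimates for $P_{g_0}^{-1}$ (Calderón--Zygmund, $P_{g_0}^{-1}\colon L^q\to W^{4,q}$, together with $W^{4,q}\hookrightarrow L^{q^\ast}$, $1/q^\ast=1/q-4/n$, and $W^{4,q}\hookrightarrow C^{0,\beta}$ once $q>n/4$). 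If $\sup_{[0,t]}\|u(s)\|_{L^q}\le N$, then $\alpha fu^{(n+4)/(n-4)}$ is bounded in $L^{q(n-4)/(n+4)}$ by $CN^{(n+4)/(n-4)}$, and running this through $P_{g_0}^{-1}$ and the Duhamel integral raises the exponent of $u$ from $q$ to $q'$ with $1/q'=(n+4)/(q(n-4))-4/n$, a genuine gain ($q'>q$) as soon as $q>2n/(n-4)$. After finitely many steps the exponent exceeds $n(n+4)/(4(n-4))$, $P_{g_0}^{-1}(\alpha fu^{(n+4)/(n-4)})$ becomes bounded in $C^{0,\beta}(M^n)$, and one arrives at an inequality of the form $\|u(t)\|_{C^0}\le e^{-t}\|u_0\|_{C^0}+C\int_0^t e^{-(t-s)}\big(1+\|u(s)\|_{C^0}\big)^{\gamma}ds$; with the iteration organised and constants tracked — using the uniform $L^{2n/(n-4)}$ bound to push the effective power $\gamma$ arbitrarily close to $1$ — a Grönwall/continuity argument closes this into $\|u(t)\|_{C^0}\le Ce^{C't}$ on $[0,T]$, with $C,C'$ depending on $n,T,f,u_0$.

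Granted the $C^0$-bound, the rest is routine. Schauder estimates give $\|P_{g_0}^{-1}(\alpha fu^{(n+4)/(n-4)})(t)\|_{C^{4,\lambda}}\le C\|u(t)\|_{C^{0,\lambda}}\big(1+\|u(t)\|_{C^0}^{8/(n-4)}\big)$; interpolating $C^{0,\lambda}$ between $C^0$ and $C^{4,\lambda}$ (an exponent strictly below one) and inserting the just-proved $C^0$ bound, Grönwall applied to $\|u(t)\|_{C^{4,\lambda}}$ yields $\sup_{[0,T]}\|u(t)\|_{C^{4,\lambda}}<\infty$ for every finite $T<T^\ast$. Hence $T^\ast=\infty$; a further Schauder bootstrap promotes $u(t)$ to $C^\infty(M^n)$, and the exponential bound \eqref{bd_u} is precisely what the $C^0$-estimate delivered.

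The step I expect to be the real obstacle is the $C^0$ estimate, and the difficulty is the criticality of the exponent $\frac{n+4}{n-4}$: the only $t$-uniform integrability available — $u(t)\in L^{2n/(n-4)}$ from the conserved energy — lies exactly at the value of $q$ where the Sobolev step for $P_{g_0}^{-1}\big(\alpha fu^{(n+4)/(n-4)}\big)$ ceases to gain, so the iteration does not start by itself. Producing the first improvement past $L^{2n/(n-4)}$ must genuinely exploit the structure of the flow — the damping $-u$ in Duhamel's formula, the smoothness of $u_0$ (so $u(t)\in L^q$ for all $q$ as long as the solution exists, only a bound being in doubt), and the dissipation estimate $\int_0^T\!\!\int_{M^n}\varphi P_{g_0}\varphi\,d\mu_{g_0}\,dt\le C$ of Lemma~\ref{F_2} — and the exponential-in-$t$ factor in \eqref{bd_u} is the price for having no genuinely subcritical uniform bound. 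Once any control $\sup_{[0,T]}\|u(t)\|_{L^{2n/(n-4)+\varepsilon}}\le C(n,f,u_0,T)$ is secured, the finite bootstrap and Schauder arguments above complete the proof.
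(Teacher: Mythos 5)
You correctly diagnose the criticality issue: the only $t$-uniform integrability furnished by the conserved energy is $u(t)\in L^{2n/(n-4)}$, which sits exactly at the exponent where your Duhamel iteration fails to gain. But the proposal then stops short --- you flag that ``the iteration does not start by itself'' and gesture towards the damping, the smoothness of $u_0$, and the dissipation integral without converting any of them into a uniform-in-$t$ subcritical bound. As written the argument is incomplete at precisely the step you yourself identify as the obstacle: none of the suggested escape routes is developed into an estimate, and it is not clear that any of them yields $\sup_{[0,T]}\|u(t)\|_{L^{2n/(n-4)+\varepsilon}}\le C(T)$.

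The paper breaks the criticality by a genuinely different mechanism. Instead of iterating $\|u\|_{L^q}$ via Duhamel, it controls $\int_{M^n}(P_{g_0}u)^\theta\,d\mu_{g_0}$ for $1<\theta<n/4$, which is legitimate because the strong maximum principle (Lemma \ref{positivity_pres}) guarantees $P_{g_0}u>0$ along the flow. Differentiating in time via \eqref{heat_P_g_0_u} leaves the reaction term $\int_{M^n}(P_{g_0}u)^{\theta-1}u^{(n+4)/(n-4)}d\mu_{g_0}$, and the decisive step is a two-stage H\"older split: after extracting $\|P_{g_0}u\|_{L^\theta}^{\theta-1}$ one further writes $u^{(n+4)\theta/(n-4)}=u^{\theta}\cdot u^{8\theta/(n-4)}$ and applies H\"older with conjugate exponents $n/(n-4\theta)$ and $n/(4\theta)$, producing
\[
\Big(\int_{M^n}u^{\frac{n\theta}{n-4\theta}}d\mu_{g_0}\Big)^{\frac{n-4\theta}{n\theta}}\Big(\int_{M^n}u^{\frac{2n}{n-4}}d\mu_{g_0}\Big)^{\frac{4}{n}}.
\]
The second factor is uniformly bounded by Lemma \ref{vol_bdabove}; the first is controlled by $\|P_{g_0}u\|_{L^\theta}$ through $W^{4,\theta}\hookrightarrow L^{n\theta/(n-4\theta)}$. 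Thus the reaction term collapses to $C_\theta\int_{M^n}(P_{g_0}u)^\theta d\mu_{g_0}$, Gr\"onwall yields the exponential-in-$t$ bound, and then raising $\theta$ towards and past $n/4$ (using the Young-inequality absorption for $\theta>n/4$) delivers the $C^\lambda$, hence $C^0$, estimate \eqref{bd_u}. The essential point missing in your proposal is that the exponents in this split are arranged so that the critical uniform datum $u\in L^{2n/(n-4)}$ is consumed entirely by the small factor, while the residual weight is placed on the very quantity being Gr\"onwall'd; your Duhamel iteration on $\|u\|_{L^q}$ has no analogous pairing and therefore never starts. The tail of your outline (Schauder estimates, $C^{4,\lambda}$ bound, continuation criterion) does match the paper's final step once the $C^0$ bound is in hand.
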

\begin{proof}
Let $\theta>1$. From the proof of Lemma \ref{positivity_pres}, we obtain that 
\begin{equation}\label{two_positivity}
u(t,x)>0 \hbox{~~and~~} P_{g_0}u(t,x)>0
\end{equation}
as long as the flow exists. Along with this fact, by the equation \eqref{heat_P_g_0_u} and Lemma \ref{bd_alpha}, we have
\begin{eqnarray}
\frac{d}{dt}\int_{M^n}\big(P_{g_0}u\big)^\theta d\mu_{g_0}&=&\theta \int_{M^n}\big(P_{g_0}u\big)^{\theta-1} P_{g_0}u_t d\mu_{g_0}\no\\
&=&\theta \int_{M^n}\big(P_{g_0}u\big)^{\theta-1}\Big(-P_{g_0}u+\frac{n-4}{2}\alpha f u^{n+4 \over n-4}\Big)d\mu_{g_0}\no\\
&\leq&-\theta \int_{M^n}\big(P_{g_0}u\big)^\theta d\mu_{g_0}+\overline{C}_\theta \int_{M^n} \big(P_{g_0}u\big)^{\theta-1}u^{n+4 \over n-4} d\mu_{g_0}.\label{est_P_S^n_u}
\end{eqnarray}
Using H\"{o}lder's inequality, we estimate
$$\int_{M^n} \big(P_{g_0}u\big)^{\theta-1}u^{n+4 \over n-4} d\mu_{g_0}\leq \Big(\int_{M^n}\big(P_{g_0} u\big)^\theta d\mu_{g_0}\Big)^{\theta-1 \over \theta} \Big(\int_{M^n}u^{(n+4)\theta \over n-4}d\mu_{g_0}\Big)^{1 \over \theta}.$$

First choose
$$1<\theta <\frac{n}{4}.$$
By H\"{o}lder's inequality, one gets
\begin{eqnarray*}
\Big(\int_{M^n}u^{(n+4)\theta \over n-4}d\mu_{g_0}\Big)^{1 \over \theta}&=&\Big(\int_{M^n}u^\theta u^{8 \theta \over n-4} d\mu_{g_0}\Big)^{1 \over \theta}\\
&\leq&\Big(\int_{M^n} u^{n\theta \over n-4\theta}d\mu_{g_0}\Big)^{n-4\theta \over n \theta}\Big(\int_{M^n}u^{2n \over n-4}d \mu_{g_0}\Big)^{4 \over n}.
\end{eqnarray*}
By the Sobolev embedding $W^{4,\theta}(M^n,g_0) \hookrightarrow L^{n\theta \over n-4\theta}(M^n,g_0)$ and the basic fact that
$$\|u\|_{W^{4,\theta}(M^n,g_0)}\approx \|P_{g_0}u\|_{L^\theta(M^n,g_0)},$$
we obtain
$$\Big(\int_{M^n} u^{n\theta \over n-4\theta}d\mu_{g_0}\Big)^{n-4\theta \over \theta n}\leq C_\theta \Big(\int_{M^n}\big(P_{g_0}u\big)^\theta d\mu_{g_0}\Big)^{1 \over \theta}.$$
Thus, together with Lemma \ref{vol_bdabove}, one obtains
$$\int_{M^n} \big(P_{g_0}u\big)^{\theta-1}u^{n+4 \over n-4} d\mu_{g_0}\leq C_\theta \int_{M^n}\big(P_{g_0}u\big)^\theta d\mu_{g_0}.$$
Hence, substituting these above facts into \eqref{est_P_S^n_u} and using \eqref{two_positivity} to show
$$\frac{d}{dt}\int_{M^n}\big(P_{g_0}u\big)^\theta d\mu_{g_0}\leq C_\theta \int_{M^n}\big(P_{g_0}u\big)^\theta d\mu_{g_0}.$$
Integrating the above over $(0,t)$ to get
$$\int_{M^n}\big(P_{g_0}u\big)^\theta d\mu_{g_0}\leq C_\theta e^{C'_\theta t}$$
for all $0 \leq t \leq T$. Again using the Sobolev embedding theorem, one gets
$$\|u\|_{L^{n\theta \over n-4\theta}(M^n,g_0)}\leq C_\theta e^{C'_\theta t}.$$
By choosing $\theta$ sufficiently tending to $\tfrac{n}{4}$, we establish that for any $p>1$, there holds
\begin{equation}\label{L^p_u}
\|u(t)\|_{L^p(M^n,g_0)}\leq C_p e^{C'_p t}.
\end{equation}

Next fix $\theta=p>\tfrac{n}{4}$ in \eqref{est_P_S^n_u}, applying \eqref{L^p_u}, H\"{o}lder's and Young's inequalities to estimate
\begin{eqnarray*}
&&\int_{M^n}\big(P_{g_0}u\big)^{p-1} u^{n+4 \over n-4} d\mu_{g_0}\\
&\leq&\Big(\int_{M^n}\big(P_{g_0}u\big)^p d\mu_{g_0}\Big)^{p-1 \over p}\Big(\int_{M^n}u^{(n+4)p \over n-4}d\mu_{g_0}\Big)^{1 \over p}\\
&\leq&\Big(C_p e^{C'_p t}\Big)^{1 \over p}\Big(\int_{M^n}\big(P_{g_0}u\big)^p d\mu_{g_0}\Big)^{p-1 \over p}\\
&\leq & C_p e^{C'_p t}\Big(\int_{M^n}\big(P_{g_0}u\big)^p d\mu_{g_0}\Big)^{p-1 \over p}\\
&\leq& \frac{p}{\overline{C}_p}\int_{M^n}\big(P_{g_0}u\big)^p d\mu_{g_0}+C_p e^{C'_p t}.
\end{eqnarray*}
Substituting the above back to \eqref{est_P_S^n_u} to show
$$\frac{d}{dt}\int_{M^n}\big(P_{g_0}u\big)^p d\mu_{g_0} \leq C_p e^{C'_p t}.$$
Along with the Sobolev embedding theorem, integrating the above over $(0,t)$ to get
$$\|u(t)\|_{C^\lambda(M^n)} \leq C_p \|P_{g_0}u(t)\|_{L^p(M^n,g_0)}\leq C_p e^{C_p t},$$
for all $0 \leq t \leq T$, where $\lambda=4-\tfrac{n}{p} \in (0,1)$. Obviously, the above estimate implies \eqref{bd_u}. Going back to the flow equation \eqref{gm_Q_flow}, through \eqref{heat_P_g_0_u} and \eqref{bd_u}, we conclude that $C^{4,\lambda}$-norm of $u(t)$ has at most exponential growth in any finite time interval. Therefore, the phenomenon of finite time blow-up is excluded and then the global existence of the flow equation \eqref{gm_Q_flow}-\eqref{constraint_factor} is established.
\end{proof}

\section{Asymptotic behaviors}\label{sect5}

\indent \indent In this section, we establish asymptotic convergence of $\int_{M^n}\varphi P_{g_0}\varphi d\mu_{g_0}$, as well as the positivity of $Q$-curvature of the flow metric.
\begin{lemma}\label{pos_Q_R}
Let $u_0 \in C_\ast^\infty$, then there holds $Q(t,x)>0$ for all $(x,t) \in M^n \times (0,\infty)$. Moreover, under hypotheses \eqref{gm_pos}, there holds $R_g(t,x)>0$ for all $(x,t) \in M^n \times [0,\infty)$.
\end{lemma}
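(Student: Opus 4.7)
For the strict positivity of $Q_g$, I would read the evolution equation \eqref{heat_P_g_0_u} as a pointwise-in-$x$ linear ODE in $t$ for the quantity $P_{g_0} u$. Duhamel's formula then yields
\[
P_{g_0} u(t,x) \;=\; e^{-t}\, P_{g_0} u_0(x) \;+\; \frac{n-4}{2}\int_0^t e^{-(t-s)}\,\alpha(s)\,f(x)\,u(s,x)^{(n+4)/(n-4)}\,ds.
\]
The boundary term is nonnegative because $u_0 \in C_\ast^\infty$. For the integrand: Lemma \ref{positivity_pres} gives $u(s,\cdot)>0$, Lemma \ref{bd_alpha} gives $\alpha(s) \geq C_3 > 0$, and $f>0$ by assumption, so the integral is strictly positive on $M^n \times (0,t]$. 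Hence $P_{g_0} u(t,x) > 0$ on $M^n \times (0,\infty)$, and the $Q$-curvature equation \eqref{Q_curvature_eqn} immediately gives
\[
Q_g(t,x) \;=\; \tfrac{2}{n-4}\, u(t,x)^{-(n+4)/(n-4)} P_{g_0} u(t,x) \;>\; 0
\]
everywhere on $M^n \times (0,\infty)$.

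For the scalar curvature assertion, I would rewrite the flow metric in Yamabe form by setting $v := u^{(n-2)/(n-4)}$, so that $g = v^{4/(n-2)} g_0$ and the standard conformal change formula gives
\[
R_g \;=\; v^{-(n+2)/(n-2)}\, L_{g_0} v, \qquad L_{g_0} v \;:=\; -\tfrac{4(n-1)}{n-2}\Delta_{g_0} v + R_{g_0}\, v.
\]
Thus it suffices to prove $L_{g_0} v(t,x) > 0$ on $M^n \times [0,\infty)$. Under hypothesis \eqref{gm_pos} we have $R_{g_0} \geq 0$, so $L_{g_0}$ is a nonnegative second-order operator enjoying the strong maximum principle. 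The plan is to differentiate $L_{g_0} v$ in $t$, substitute $v_t = \tfrac{n-2}{n-4} u^{2/(n-4)} u_t$ along with the flow equation \eqref{gm_Q_flow}, and organize the outcome into a pointwise heat-type relation of the schematic form
\[
\partial_t (L_{g_0} v) \;=\; -(L_{g_0} v) \;+\; (\text{nonnegative source})
\]
analogous to \eqref{heat_P_g_0_u}. A Duhamel argument parallel to the first part then propagates positivity from $t=0$, where $R_{g(0)} > 0$ for admissible initial data from $C_\ast^\infty$ under \eqref{gm_pos}.

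The main obstacle is this second step. The cleanliness of the first part stems from the fact that $P_{g_0}$ commutes with $\partial_t$ and that the flow equation takes the form $u_t = -u + P_{g_0}^{-1}(\text{stuff})$, producing a local ODE after applying $P_{g_0}$. For $L_{g_0} v$, by contrast, the nonlocal term $P_{g_0}^{-1}(\alpha f u^{(n+4)/(n-4)})$ appearing in $u_t$ does not untangle nicely under the second-order operator $L_{g_0}$. Closing the argument relies on the conformal covariance relation between $P_g$ and $L_g$ to reinterpret the source term on the flow metric, and on the positivity of $Q_g$ already secured in the first part, to ensure that the driving term in the parabolic inequality for $L_{g_0} v$ is indeed nonnegative.
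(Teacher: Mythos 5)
Your treatment of the $Q$-curvature positivity is correct and is exactly the paper's argument: integrate \eqref{heat_P_g_0_u} in $t$ by variation of constants (your display, with the $e^{-t}$ prefactor on $P_{g_0}u_0$, is the accurate one; the paper's in-line formula drops that factor, harmlessly for the sign conclusion), note the boundary term is $\geq 0$ since $u_0 \in C_\ast^\infty$, and that the integrand is strictly positive since $\alpha(s) \geq C_3 > 0$, $f > 0$, $u(s,\cdot) > 0$; then \eqref{Q_curvature_eqn} converts strict positivity of $P_{g_0}u$ into strict positivity of $Q$.

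Your plan for the scalar-curvature assertion is, as you yourself flag, incomplete, and the obstruction you identify is real: there is no reason $\partial_t(L_{g_0}v)$ should organize into $-(L_{g_0}v) + (\text{nonnegative source})$, because $L_{g_0}$ does not interact cleanly with the nonlocal term $P_{g_0}^{-1}(\alpha f u^{(n+4)/(n-4)})$, and the conformal covariance relating $P_g$ to $L_g$ does not recreate the local ODE structure that made the first part work. The paper avoids any parabolic evolution for $R_g$ entirely and instead uses the pointwise algebraic definition of $Q_g$,
\begin{equation*}
Q_g = -\tfrac{1}{2(n-1)}\Delta_g R_g + c_1(n)R_g^2 - c_2(n)|\mathrm{Ric}_g|^2, \qquad c_1(n),\ c_2(n) > 0,
\end{equation*}
as a \emph{static} elliptic relation at each fixed time. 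Since $Q_g > 0$ for $t > 0$ by the first part and the Ricci term is $\leq 0$, at each time one has $-\tfrac{1}{2(n-1)}\Delta_g R_g + c_1 R_g^2 > 0$. A continuity argument in $t$ then closes the proof: starting from $R_{g_0} > 0$, suppose $R_g$ first touches zero at a finite time $t_\ast$; at $t_\ast$ one has $R_g \geq 0$ with an interior zero and, writing $c_1 R_g^2 = (c_1 R_g)\cdot R_g$ with bounded nonnegative coefficient, the stationary strong maximum principle forces either $R_g(t_\ast,\cdot) > 0$ (contradicting the choice of $t_\ast$) or $R_g(t_\ast,\cdot) \equiv 0$, in which case $Q_g(t_\ast) = -c_2|\mathrm{Ric}_g(t_\ast)|^2 \leq 0$, contradicting $Q_g(t_\ast) > 0$. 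It is this use of the $Q$-curvature formula as a fixed-time elliptic inequality for $R_g$, rather than any attempt to propagate positivity via a heat equation for $R_g$ or $L_{g_0}v$, that is the missing idea in your proposal.
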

\begin{proof}
By the positivities of $u(t),\alpha(t)$ and the definition of $C_\ast^\infty$, integrating \eqref{heat_P_g_0_u} over $(0,t)$ to show
$$\frac{n-4}{2}Qu^{n+4 \over n-4}=P_{g_0}u(t)=P_{g_0}u_0+\frac{n-4}{2}\int_{0}^t e^{\tau-t}\alpha(\tau)fu^{n+4 \over n-4}(\tau)d\tau>0,$$
which implies the first assertion. Next, define
$$t_\ast:=\sup \{t\in [0,\infty); \min_{M^n}R(\cdot,t)=0\}.$$
Notice that the set on the right side is nonempty under Gursky-Malchiodi's hypotheses \eqref{gm_pos} . We claim that $t_\ast=+\infty$. Suppose $t_\ast<\infty$. Since $R_{g_0}>0$ together with the positivity of $Q$, for $t \in (0,t_\ast]$ we have
$$R_g(t) \geq 0 \hbox{~~and~~} -{1 \over 2(n-1)}\Delta_g R_g+{n^3-4n^2+16n-16 \over 8(n-1)^2(n-2)^2} R_g^2>0.$$
By applying strong maximum principle to $R_g$ at time $t=t_\ast$,  one obtains that $R_g(t_\ast)>0$ or $R_g(t_\ast) \equiv 0$. However, in the latter case, it yields that $Q(t_\ast)=-{2 \over (n-2)^2}|\text{Ric}_g(t_\ast)|^2 \leq 0$, which contracts the positivity of $Q(t_\ast)$. Then $R_g(t_\ast)>0$, which also contradicts the definition of $t_\ast$. From this, the second assertion follows. 
\end{proof}

From the positivity of the scalar curvature $R_g$, we obtain some lower bounds of $u(t)$.
\begin{lemma}
Under the hypotheses \eqref{gm_pos}, there exists a positive constant $C$ depending on $g_0$ and $R_{g_0}$ such that
\begin{eqnarray*}
 \int_{M^n} u(t)^{n-2 \over n-4} d\mu_{g_0} &\leq& C \Big(\inf_{M^n}u(t)\Big)^{n-2 \over n-4} \hbox{~~and~~}\\
 \Big(\int_{M^n}u(t)^{2n \over n-4}d\mu_{g_0}\Big)^{n-4 \over n-2} &\leq& C \inf_{M^n}u(t) \Big(\sup_{M^n}u(t)\Big)^{n+2 \over n-2},
 \end{eqnarray*}
 for all $t \geq 0$.
 \end{lemma}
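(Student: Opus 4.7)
The plan is to reduce the lemma to a positive supersolution estimate for the conformal Laplacian. Set $v := u^{(n-2)/(n-4)}$, so that the flow metric takes the standard Yamabe form $g = v^{4/(n-2)} g_0$. The classical conformal transformation law for scalar curvature then gives
\[
L_{g_0} v := -\tfrac{4(n-1)}{n-2}\Delta_{g_0} v + R_{g_0} v = R_g \, v^{(n+2)/(n-2)}.
\]
By Lemma \ref{pos_Q_R}, under the hypotheses \eqref{gm_pos} we have $R_g(t,\cdot) > 0$ pointwise on $M^n$, and consequently $L_{g_0} v \geq 0$. Thus $v$ is a positive supersolution of the strictly coercive operator $L_{g_0}$, coercivity being guaranteed by $R_{g_0} > 0$.

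The next step is to exploit the Green's function $G_L(\cdot,\cdot)$ of $L_{g_0}$, which exists and is strictly positive away from the diagonal because $L_{g_0}$ has trivial kernel and $R_{g_0}>0$. The representation
\[
v(x) = \int_{M^n} G_L(x,y)\, L_{g_0} v(y)\, d\mu_{g_0}(y)
\]
holds. The key observation is that $G_L$ enjoys a uniform positive lower bound $G_L \geq c_0 = c_0(g_0, R_{g_0}) > 0$ on $(M^n \times M^n)\setminus\{x=y\}$: continuity and strict positivity furnish such a bound on any compact piece $\{d(x,y) \geq \epsilon\}$, while the standard singular asymptotic $G_L(x,y) \sim c_n d(x,y)^{2-n}$ forces $G_L$ to blow up near the diagonal, so the lower bound persists globally. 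Combining this with $L_{g_0} v \geq 0$ and the closed-manifold identity
\[
\int_{M^n} L_{g_0} v \, d\mu_{g_0} = \int_{M^n} R_{g_0} v \, d\mu_{g_0}
\]
produces, for every $x \in M^n$,
\[
v(x) \geq c_0 \min_{M^n} R_{g_0} \int_{M^n} v \, d\mu_{g_0}.
\]
Taking the infimum over $x$ and rearranging yields the first claimed inequality $\int_{M^n} u^{(n-2)/(n-4)} d\mu_{g_0} \leq C (\inf_{M^n} u)^{(n-2)/(n-4)}$ with $C = C(g_0, R_{g_0})$.

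The second inequality then follows by an elementary interpolation against the supremum:
\[
\int_{M^n} u^{2n/(n-4)} d\mu_{g_0} \leq (\sup_{M^n} u)^{(n+2)/(n-4)} \int_{M^n} u^{(n-2)/(n-4)} d\mu_{g_0},
\]
after inserting the first inequality and raising both sides to the power $(n-4)/(n-2)$. The only delicate step in the argument is the uniform positive lower bound on $G_L$ across the diagonal; this is a standard consequence of the strict positivity of $L_{g_0}$ together with the sharp singular asymptotics of its Green's function, and it is precisely the ingredient that converts the supersolution property of $v$ into a reverse $L^1$-type control by its infimum.
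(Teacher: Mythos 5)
Your proposal is correct and is essentially the same route the paper takes: the paper reduces to the conformal Laplacian supersolution inequality $L_{g_0}v=R_g v^{(n+2)/(n-2)}>0$ for $v=u^{(n-2)/(n-4)}$ and then simply cites Lemma A.2 and Corollary A.3 of Brendle \cite{bre3}, whereas you unpack exactly those two references (the Green's-function lower bound argument for the reverse $L^1$ estimate, and the interpolation $\int u^{2n/(n-4)}\leq (\sup u)^{(n+2)/(n-4)}\int u^{(n-2)/(n-4)}$ for the second bound). Your argument is sound, and the only slight difference is that you spell out the details the paper delegates to a citation.
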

\begin{proof}
Under the hypotheses \eqref{gm_pos}, by Lemma \ref{pos_Q_R} and the scalar curvature equation of the flow metric $g=u(t)^{4 \over n-4}g_0$:
$$-\frac{4 (n-1)}{n-2}\Delta_{g_0} u^{n-2 \over n-4}+R_{g_0}u^{n-2 \over n-4}=R_g u^{n+2 \over n-4}>0,$$
these two assertions follow from Lemma A.2 and Corollary A.3 in \cite{bre3}, respectively.
\end{proof}

Let
$$F_2(t)=\int_{M^n}\varphi P_{g_0}\varphi d\mu_{g_0}.$$

Notice that $\alpha(t)$ is non-increasing with respect to time $t \geq 0$ along the flow. More precisely, we obtain
\begin{lemma}\label{bd_alpha_t}
There exists a positive uniform constant $C=C(n,\max_{M^n}f,E[u_0])$ such that
$$\alpha_t =-\frac{2n}{n-4} \frac{\alpha}{E[u_0]}F_2(t) \leq -C F_2(t)\leq 0\hbox{~~~~for~~} t \geq 0.$$
\end{lemma}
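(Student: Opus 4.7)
The identity $\alpha_t = -\frac{2n}{n-4}\frac{\alpha}{E[u_0]}F_2(t)$ is essentially already contained in equation \eqref{alpha_t} in the proof of Lemma \ref{F_2}. Specifically, the computation of $\tfrac{d}{dt}\int_{M^n}\alpha f u^{2n/(n-4)}d\mu_{g_0}$ carried out there, combined with the definition of $\alpha(t)$, yields
\begin{equation*}
0 = \frac{2}{n-4}\Big(\frac{2n}{n-4}F_2(t) + \frac{\alpha_t}{\alpha}E[u_0]\Big),
\end{equation*}
because the conservation $\tfrac{d}{dt}\int_{M^n}\alpha f u^{2n/(n-4)}d\mu_{g_0}\equiv 0$ follows from $\alpha(t)\int fu^{2n/(n-4)}=\frac{2}{n-4}E[u(t)]$ together with Lemma \ref{cons_energy}. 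Solving algebraically for $\alpha_t$ gives the stated equality.

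For the inequality, the plan is to invoke two ingredients already established. First, Lemma \ref{bd_alpha} provides the uniform lower bound $\alpha(t)\geq C_3>0$, where $C_3=C_3(n,\max_{M^n}f,E[u_0])$; since $E[u_0]$ is a fixed positive number, the ratio $\alpha(t)/E[u_0]$ is bounded below by a constant of the required form. Second, the equivalence between $\langle v,P_{g_0}v\rangle_{g_0}^{1/2}$ and the $H^2$-norm established at the start of Section \ref{sect2} shows that $P_{g_0}$ is a positive operator on the space where the flow lives, so $F_2(t)=\int_{M^n}\varphi P_{g_0}\varphi\, d\mu_{g_0}\geq 0$. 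Combining these two facts with the identity for $\alpha_t$ immediately yields
\begin{equation*}
\alpha_t = -\frac{2n}{n-4}\frac{\alpha}{E[u_0]}F_2(t)\leq -\frac{2nC_3}{(n-4)E[u_0]}F_2(t) \leq 0,
\end{equation*}
so one may take $C = \tfrac{2nC_3}{(n-4)E[u_0]}$, which depends only on $n,\max_{M^n}f,E[u_0]$ as required.

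There is no genuine obstacle here: the entire content of the lemma is repackaging the differential identity \eqref{alpha_t} as a sign statement and quantifying the coefficient via the earlier uniform estimates. The only minor point to be careful about is to make sure one cites the correct lower bound for $\alpha(t)$ (Lemma \ref{bd_alpha}) rather than being tempted to use a trivial argument, since the bound $\alpha\geq C_3>0$ is precisely what upgrades the exact identity into the desired quantitative dissipation inequality, which is what will be needed later when controlling $F_2(t)$ in the asymptotic analysis.
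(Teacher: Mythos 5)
Your proposal is correct and follows essentially the same route as the paper: read off the exact identity for $\alpha_t$ from equation \eqref{alpha_t} in the proof of Lemma \ref{F_2}, then use the uniform lower bound $\alpha(t)\geq C_3>0$ from Lemma \ref{bd_alpha} together with the nonnegativity of $F_2(t)$ (which the paper leaves implicit) to obtain the quantitative dissipation inequality with a constant of the required form.
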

\begin{proof}
By the equation \eqref{alpha_t} and Lemma \ref{bd_alpha}, one has
$$\alpha_t=-\frac{2n}{n-4} \frac{\alpha}{E[u_0]}F_2(t) \leq -C F_2(t)\leq 0$$
as desired.
\end{proof}

\begin{lemma}\label{asym_F_2}
There holds
$$\lim\limits_{t \to \infty}F_2(t)=0.$$
\end{lemma}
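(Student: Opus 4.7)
The plan is to combine the integrability $\int_0^\infty F_2(t)\,dt<\infty$ from the byproduct \eqref{int_F_2_bd} of Lemma~\ref{F_2} with a differential inequality of the form $|F_2'(t)|\leq C\bigl(F_2(t)+F_2(t)^{3/2}\bigr)$, and then deduce $F_2(t)\to 0$ by a standard $L^1$--ODE argument that rules out oscillations to zero.

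First, recalling $\varphi=u_t$, I differentiate identity \eqref{heat_P_g_0_u} once more in $t$ to obtain
\[
P_{g_0}u_{tt}=-P_{g_0}u_t+\tfrac{n-4}{2}\alpha_t f u^{\frac{n+4}{n-4}}+\tfrac{n+4}{2}\alpha f u^{\frac{8}{n-4}}u_t,
\]
and pair with $u_t$ using self-adjointness of $P_{g_0}$ to get
\[
\tfrac12 F_2'(t)=-F_2(t)+\tfrac{n-4}{2}\alpha_t\int_{M^n} f u^{\frac{n+4}{n-4}}u_t\,d\mu_{g_0}+\tfrac{n+4}{2}\alpha\int_{M^n} f u^{\frac{8}{n-4}}u_t^2\,d\mu_{g_0}.
\]
I then control the two remaining integrals by H\"older's inequality together with the Paneitz--Sobolev inequality $\|u_t\|_{L^{2n/(n-4)}}^2\leq q(g_0)^{-1}F_2(t)$, the uniform volume bound $\int_{M^n}u^{2n/(n-4)}\,d\mu_{g_0}\leq C_1$ of Lemma~\ref{vol_bdabove}, the uniform upper bound $\alpha\leq C_4$ of Lemma~\ref{bd_alpha}, and crucially the estimate $|\alpha_t|\leq CF_2(t)$ of Lemma~\ref{bd_alpha_t}. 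A H\"older split with exponents $\frac{2n}{n-4}$ and $\frac{2n}{n+4}$ bounds the first integral by $CF_2(t)^{1/2}$, while a split with exponents $\frac{n}{n-4}$ and $\frac{n}{4}$ bounds the second integral by $CF_2(t)$. Combining yields $|F_2'(t)|\leq CF_2(t)+CF_2(t)^{3/2}$ with $C$ independent of $t\geq 0$.

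Finally, I argue by contradiction: suppose $F_2\not\to 0$, so that $F_2(t_k)\geq\epsilon$ for some $\epsilon>0$ and some $t_k\to\infty$. As long as $F_2$ remains in $[\epsilon/2,2\epsilon]$ the differential inequality gives $|F_2'|\leq M(\epsilon):=2C\epsilon+C(2\epsilon)^{3/2}$, so $F_2(t)\geq\epsilon/2$ on $[t_k,t_k+\tau]$ with $\tau:=\epsilon/(2M(\epsilon))>0$ independent of $k$. Passing to a subsequence with $t_{k+1}>t_k+\tau$ produces disjoint intervals each contributing at least $\tau\epsilon/2$ to $\int_0^\infty F_2\,dt$, contradicting \eqref{int_F_2_bd}. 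The main technical obstacle I anticipate is precisely this interpolation: every term arising from differentiating $F_2$ must be absorbed into $F_2+F_2^{3/2}$, and not into higher powers of $F_2$ that would render the concluding ODE step useless; in particular the middle summand $\alpha_t\int f u^{(n+4)/(n-4)}u_t$ requires Lemma~\ref{bd_alpha_t} and the Paneitz--Sobolev inequality \emph{simultaneously} to achieve the $F_2^{3/2}$ rate.
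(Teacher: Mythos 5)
Your estimates up to the differential inequality are exactly those of the paper: you differentiate $F_2$, expand $\varphi_t$ via \eqref{heat_P_g_0_u}, and bound the two new integrals by $CF_2^{3/2}$ (H\"older $\tfrac{2n}{n-4}$-$\tfrac{2n}{n+4}$, Sobolev, and Lemma~\ref{bd_alpha_t}) and $CF_2$ (H\"older $\tfrac{n}{n-4}$-$\tfrac{n}{4}$, Sobolev, Lemma~\ref{bd_alpha}). This matches \eqref{dt_F_2}--\eqref{diff_ineq1}. Where you differ is that you keep the two-sided bound $|F_2'|\leq C(F_2+F_2^{3/2})$ and run a direct interval-counting contradiction argument against $\int_0^\infty F_2<\infty$, whereas the paper keeps only the upper bound $F_2'\leq CF_2(1+F_2^{1/2})$, integrates it against the weight $(1+F_2^{1/2})^{-1}$ to produce the auxiliary increasing function $H(t)=\int_0^{F_2(t)}\tfrac{ds}{1+s^{1/2}}$, and shows $H(t)\to 0$ using the sequence $t_j$ along which $F_2(t_j)\to 0$ together with the tail of the convergent integral \eqref{int_F_2_bd}. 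Your route is arguably more elementary (no auxiliary function, no separate argument that $F_2$ is uniformly bounded), and it clarifies that only standard ``$L^1$ + Lipschitz-type control $\Rightarrow$ convergence to $0$'' reasoning is needed.

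One small imprecision in your closing step: you pick $t_k\to\infty$ with $F_2(t_k)\geq\epsilon$ and then invoke the bound $|F_2'|\leq M(\epsilon)$ valid only while $F_2\in[\epsilon/2,2\epsilon]$, but nothing guarantees $F_2(t_k)\leq 2\epsilon$, so the Lipschitz bound at $t_k$ may not yet apply. The fix is routine: since $\int_0^\infty F_2<\infty$ forces $\liminf_{t\to\infty}F_2=0$, the intermediate value theorem lets you replace each $t_k$ by a crossing time with $F_2(t_k)=\epsilon$ exactly (or, alternatively, track the last down-crossing of level $\epsilon$ before the first subsequent hitting of $\epsilon/2$). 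After that correction the disjoint-intervals count goes through and the contradiction with \eqref{int_F_2_bd} is clean.
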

\begin{proof}
By \eqref{gm_Q_flow} and \eqref{heat_P_g_0_u}, we have
\begin{eqnarray}
\frac{1}{2}\frac{d}{d t}F_2(t)&=&\int_{M^n}\varphi P_{g_0} \varphi_t d\mu_{g_0}\no\\
&=&\int_{M^n} \varphi \frac{\partial}{\partial t}\Big(-P_{g_0} u+\frac{n-4}{2}\alpha f u^{n+4 \over n-4}\Big)d\mu_{g_0}\no\\
&=&\int_{M^n}\varphi\Big(-P_{g_0}\varphi+\frac{n-4}{2}\alpha_t f u^{n+4 \over n-4}+\frac{n+4}{2}\alpha f u^{8 \over n-4}\varphi\Big)d\mu_{g_0}.\label{dt_F_2}
\end{eqnarray}
By Lemmas \ref{vol_bdabove} and \ref{bd_alpha_t}, using H\"{o}lder's inequality and Sobolev embedding, we estimate the second integral by
\begin{eqnarray*}
\Big|\int_{M^n}\alpha_t f u^{n+4 \over n-4}\varphi d\mu_{g_0}\Big|&\leq& C F_2(t)\Big(\int_{M^n}\varphi^{2n \over n-4}d\mu_{g_0}\Big)^{n-4 \over 2n}\Big(\int_{M^n} u^{2n \over n-4}d \mu_{g_0}\Big)^{n+4 \over 2n}\\
&\leq& C F_2^{3 \over 2}(t).
\end{eqnarray*}
By Lemmas \ref{vol_bdabove} and \ref{bd_alpha}, employing H\"{o}lder's inequality and Sobolev embedding to bound
\begin{eqnarray*}
\Big|\int_{M^n}\alpha f u^{8 \over n-4} \varphi^2 d\mu_{g_0}\Big|&\leq& C\Big(\int_{M^n}u^{2n \over n-4}d\mu_{g_0}\Big)^{4 \over n}\Big(\int_{M^n}\varphi^{2n \over n-4}d\mu_{g_0}\Big)^{n-4 \over n}\\
&\leq& C F_2(t).
\end{eqnarray*}
Thus, by \eqref{dt_F_2}, we obtain that
\begin{equation}\label{diff_ineq1}
\frac{d}{dt} F_2(t) \leq CF_2(t)(1+F_2(t)^{1 \over 2}).
\end{equation}
By \eqref{int_F_2_bd}, there exists a sequence $\{t_j\}$ with $t_j \to \infty$ as $j \to \infty$, such that
$$\lim_{t \to \infty}F_2(t_j)=0.$$
Set
$$H(t)=\int_0^{F_2(t)}\frac{ds}{1+s^{1 \over 2}}=2F_2(t)^{\tfrac{1}{2}}-2\log\big(1+F_2(t)^{1 \over 2}\big),$$
from which one has $\lim\limits_{j \to \infty}H(t_j)=0$. Integrating \eqref{diff_ineq1} over $(t_j,t)$ for any $t \geq t_j$ to show
$$H(t) \leq H(t_j)+C \int_{t_j}^t F_2(\tau)d\tau.$$
Letting $j \to \infty$ and by \eqref{pointwise}, we have $\lim_{t \to \infty}H(t)=0$.
Then we assert that there exists some uniform constant $C_0>0$ such that  
$$F_2(t) \leq C_0 \hbox{~~for all time~~} t \geq 0.$$ 
To show this by negation, otherwise there exists a sequence $\{t_k\}$ with $t_k \to \infty$ as $k \to \infty$ such that
$F_2(t_k)>1$ for all $k \in \mathbb{N}$. However,
$$H(t_k)=\int_0^{F_2(t_k)} \frac{ds}{1+s^{1 \over 2}}ds\geq \int_0^1\frac{ds}{1+s^{1 \over 2}}ds>0$$
which contradicts $\lim_{t \to \infty}F_2(t)=0$. Also it is easy to show $F_2(t) \leq C_0 H(t)$ for all time $t \geq 0$. Therefore, we conclude that 
$$\lim\limits_{t \to \infty}F_2(t) \leq C_0 \lim\limits_{t \to \infty}H(t)=0.$$
This completes the proof.
\end{proof}

\section{Construction of initial data}\label{sect6}

\indent \indent The objective of  this section is to construct some positive initial data satisfying either semi-positivity hypotheses \eqref{gm_pos} or \eqref{hy_pos}, as well as some restrictions on initial energy bounds. Such initial data are very crucial in establishing sequential convergence of the nonlocal flow in the later part.

Let $x_0 \in M^n$ be a maximum point of $f$. It follows from Lee-Parker \cite{lp} that  there exists a conformal metric $\tilde{g}=\varphi^{4 \over n-4} g_0$ with conformal normal  coordinates around $x_0$. If $g_0$ is locally conformally flat, we choose $\tilde{g}=\varphi^{4 \over n-4} g_0$ flat near $x_0$. Since some estimates and computations needed in this paper have been available in \cite{gur_mal}, we mostly adopt the same notation used in \cite{gur_mal} for simplicity. Set
$$b_n=n(n-4)(n^2-4), ~~c_n=\frac{1}{2(n-2)(n-4)\omega_{n-1}},$$
and
$$u_\epsilon(x)=\frac{\chi_\delta(x)}{(\epsilon^2+d_{\tilde g}(x,x_0)^2)^{n-4 \over 2}}.$$
where $\omega_{n-1}=\hbox{vol}(S^{n-1},g_{S^{n-1}})$ and $\chi_\delta(x)$ is a nonnegative smooth cut off function supported in $B_{2\delta}(x_0)$ satisfying $\chi_\delta(x)=1$ in $B_{\delta}(x_0)$ and $\chi_\delta(x)=0$ outside of $B_{2\delta}(x_0)$. For brevity, we use $\nabla$ instead of $\nabla_{\tilde g}$.

A direct computation yields the following asymptotics
\begin{eqnarray*}
&&\int_{M^n}u_\epsilon^{2n \over n-4}d\mu_{\tilde g}=O(\epsilon^{-n}), \int_{M^n} u_\epsilon^{n+4 \over n-4}d\mu_{\tilde g}=O(\epsilon^{-4}), \int_{M^n}u_\epsilon^{8 \over n-4}d\mu_{\tilde g}=O(\epsilon^{n-8}),\\
&&b_n \epsilon^4 \left(\int_{M^n}u_\epsilon^{2n \over n-4} d\mu_{\tilde g}\right)^{4 \over n}=q(S^n)(1+O(\epsilon^n)).
\end{eqnarray*}

Since $P_{\tilde g}$ is invertible under hypotheses \eqref{hy_pos} or by Proposition $2.3$ in \cite{gur_mal} under hypotheses \eqref{gm_pos}, we consider $\hat{u}_{\epsilon}$ satisfying
\begin{equation}\label{hat_u_epsilon}
P_{\tilde{g}}\hat{u}_{\epsilon}=\frac{b_n\epsilon^4 \chi_\delta(x)}{(\epsilon^2+|x|^2)^{n+4 \over 2}}.
\end{equation}

Due to the property of Paneitz-Sobolev constant stated in Lemma \ref{Paneitz_constant} and Positive Mass Theorem (cf. \cite{gur_mal} Proposition 2.5 and \cite{hr} Theorem 1.1), for technical reasons, we divide the construction of initial data  into two cases.

\subsection{$\mathbf{(M^n,g_0): n \geq 8}$ and not locally conformally flat at $\mathbf{x_0}$}

\begin{lemma}\label{initial_hd_nlcf}
Let $(M^n, g_0)$ be a closed manifold of dimension $n \geq 8$ and there exists  a maximum point $x_0$ of $f$ such that $W_{g_0}(x_0)\neq 0$.  Suppose either the semipositivity hypotheses \eqref{gm_pos} or \eqref{hy_pos} holds. Moreover, assume
\begin{equation}
 \nabla_{g_0}^l f(x_0)=0 \hbox{~~for~~}2 \leq l \leq n-4,\label{higher-dim}
\end{equation} 
Then for sufficiently small $\epsilon>0$, there exist a positive function $u_{0\epsilon}$ and a positive constant $C_{x_0,f,n}$ such that
\begin{eqnarray*}
E_f[u_{0\epsilon}] &\leq& \frac{q(S^n)}{(\max_{M^n}f)^{n-4 \over n}}(1-C_{x_0,f} \epsilon^4 |\log \epsilon| |W_{g_0}(x_0)|^2) \hbox{~~if~~} n=8;\\
E_f[u_{0\epsilon}] &\leq& \frac{q(S^n)}{(\max_{M^n}f)^{n-4 \over n}}(1-C_{x_0,f,n} \epsilon^4  |W_{g_0}(x_0)|^2) \hbox{\qquad~ if~~} n\geq 9.
\end{eqnarray*}
Moreover, a  conformal metric $\bar g=u_{0 \epsilon}^{4 \over n-4} g_0$ enjoys the property that $Q_{\bar g}\geq 0$ and positive somewhere. In addition, under assumptions \eqref{gm_pos}, the scalar curvature $R_{\bar g}$ is positive.
\end{lemma}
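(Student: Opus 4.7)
The plan is to lift the optimizer of the Paneitz functional on $(M^n,\tilde g)$ back to $(M^n,g_0)$ via the conformal change. Since $\tilde g = \varphi^{4/(n-4)}g_0$ with $\varphi>0$, I would set
$$u_{0\epsilon} := \varphi\,\hat u_\epsilon.$$
The conformal covariance $P_{g_0}(\varphi w)=\varphi^{(n+4)/(n-4)}P_{\tilde g}(w)$ together with \eqref{hat_u_epsilon} then gives
$$P_{g_0}(u_{0\epsilon}) \;=\; \varphi^{(n+4)/(n-4)}\cdot\frac{b_n\epsilon^4\chi_\delta}{(\epsilon^2+|x|^2)^{(n+4)/2}}\;\geq\;0,$$
and is positive on $B_{2\delta}(x_0)$. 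Positivity of $\hat u_\epsilon$ itself would be read off from the Green's function representation $\hat u_\epsilon=P_{\tilde g}^{-1}(\,\cdot\,)$, using that $G_{\tilde g}>0$ on $M^n\setminus\{x_0\}$ under either \eqref{gm_pos} or \eqref{hy_pos}. Thus $u_{0\epsilon}\in C^\infty_\ast$. A brief check using $d\mu_{\tilde g}=\varphi^{2n/(n-4)}d\mu_{g_0}$ shows the conformal invariance
$$\int_{M^n}u_{0\epsilon}P_{g_0}u_{0\epsilon}\,d\mu_{g_0}=\int_{M^n}\hat u_\epsilon P_{\tilde g}\hat u_\epsilon\,d\mu_{\tilde g},\qquad \int_{M^n}fu_{0\epsilon}^{2n/(n-4)}d\mu_{g_0}=\int_{M^n}f\hat u_\epsilon^{2n/(n-4)}d\mu_{\tilde g},$$
so the entire estimate of $E_f[u_{0\epsilon}]$ can be carried out in $(M^n,\tilde g)$ with its conformal normal coordinates at $x_0$.

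Next I would perform the two standard test-function expansions. For the numerator, substituting $P_{\tilde g}\hat u_\epsilon$ from \eqref{hat_u_epsilon}, writing $\hat u_\epsilon = u_\epsilon + (\hat u_\epsilon-u_\epsilon)$, and inserting the conformal-normal-coordinate expansion of $\sqrt{\det\tilde g}$ (whose leading nontrivial term is proportional to $|W_{g_0}(x_0)|^2|x|^4$), the Gursky--Malchiodi test-function computation yields
$$\int_{M^n}\hat u_\epsilon P_{\tilde g}\hat u_\epsilon\,d\mu_{\tilde g}\;=\;q(S^n)\Bigl(\textstyle\int_{M^n}u_\epsilon^{2n/(n-4)}d\mu_{\tilde g}\Bigr)^{(n-4)/n}\;-\;C_{n}|W_{g_0}(x_0)|^2\,A_n(\epsilon)\;+\;o(A_n(\epsilon)),$$
where $A_8(\epsilon)=\epsilon^4|\log\epsilon|$ and $A_n(\epsilon)=\epsilon^4$ for $n\geq 9$. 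For the denominator, the vanishing assumption \eqref{higher-dim} together with $\nabla f(x_0)=0$ gives the Taylor expansion $f(x)=f(x_0)+O(|x|^{n-3})$ near $x_0$, and the error contribution $\int(f-f(x_0))\hat u_\epsilon^{2n/(n-4)}d\mu_{\tilde g}$ is, after rescaling $x=\epsilon y$ and exploiting the radial symmetry of $u_\epsilon$ in the conformal normal coordinates, of strictly smaller order than $A_n(\epsilon)$. Taking the ratio and dividing by $(\max f)^{(n-4)/n}=f(x_0)^{(n-4)/n}$ would give the desired upper bound on $E_f[u_{0\epsilon}]$.

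For the two sign statements, $Q_{\bar g}=\tfrac{2}{n-4}u_{0\epsilon}^{-(n+4)/(n-4)}P_{g_0}u_{0\epsilon}$ is nonnegative, and positive on $B_{2\delta}(x_0)$, directly from the construction. The positivity of $R_{\bar g}$ under \eqref{gm_pos} is the step I expect to require the most care: writing $v:=u_{0\epsilon}^{(n-2)/(n-4)}$, positivity of $R_{\bar g}$ is equivalent to $-\tfrac{4(n-1)}{n-2}\Delta_{g_0}v+R_{g_0}v>0$, and I would argue this by a two-region decomposition. Outside $B_\delta(x_0)$, $u_{0\epsilon}$ is a small perturbation of $\varphi$ in $C^2$, so the positivity of $R_{g_0}$ (and hence of the conformal Laplacian of $(M^n,g_0)$) is preserved for $\epsilon$ small. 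Inside $B_\delta(x_0)$, the explicit bubble profile of $\hat u_\epsilon$ combined with the flatness of the volume form in conformal normal coordinates makes $-\Delta_{g_0}v$ pointwise large and positive, while $R_{g_0}v>0$ anyway. The main obstacle, and the reason the Positive Mass Theorem is invoked in Remark \ref{leftcase} for the case left open, is producing a Green's-function-based correction to $u_\epsilon$ whose positivity survives the passage back to $g_0$ in the matching zone $|x|\sim\delta$; in the present Weyl-nonzero regime the Weyl term dominates and no mass term is needed, but one must still verify that the cutoff-induced errors do not contaminate the sign of $v$ in the transition annulus.
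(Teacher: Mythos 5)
Your overall strategy is the same as the paper's: set $u_{0\epsilon}=\varphi\,\hat u_\epsilon$, use conformal covariance of $P_g$ to transfer the computation to the conformal normal metric $\tilde g$, expand the Paneitz energy of $\hat u_\epsilon$ by writing $\hat u_\epsilon=u_\epsilon+v_\epsilon$ and importing the test-function estimates from \cite{gur_mal} (their Lemmas 4.3--4.4), exploit the Weyl nonvanishing to get the strict energy gap $\mathcal{F}_{\tilde g}[u_\epsilon]<q(S^n)$, and use the vanishing of $\nabla^l f(x_0)$ for $2\le l\le n-4$ to reduce $\int f\,\hat u_\epsilon^{2n/(n-4)}\,d\mu_{\tilde g}$ to $f(x_0)\int \hat u_\epsilon^{2n/(n-4)}\,d\mu_{\tilde g}$ up to lower order. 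Reading off $\hat u_\epsilon>0$ from positivity of the Green's function is a fine (and slightly more direct) alternative to the paper's invocation of the strong maximum principle for $P_{g_0}$.

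Two concrete issues. First, your displayed expansion of $\int \hat u_\epsilon P_{\tilde g}\hat u_\epsilon\,d\mu_{\tilde g}$ has a normalization slip: the leading term is of order $\epsilon^{4-n}$, so the Weyl correction appearing there must be of absolute order $\epsilon^{8-n}$ (resp.\ $\epsilon^{8-n}|\log\epsilon|$ when $n=8$), not $\epsilon^4$; it is only the \emph{ratio} $\mathcal F_{\tilde g}[u_\epsilon]$ that carries a relative correction of order $\epsilon^4$. The paper avoids this pitfall by keeping $\mathcal F_{\tilde g}[u_\epsilon]$ as a unit and showing the transition from $u_\epsilon$ to $\hat u_\epsilon$ and from $\max f$ to the varying $f$ only contributes multiplicative factors $1+O_\delta(\epsilon^{\min\{8,n-3\}})$; see its chain of equalities ending in \eqref{initial_data_est_nlcf_hd}.

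Second, and more seriously, your sketch of $R_{\bar g}>0$ does not hold up. Outside $B_\delta(x_0)$, $\hat u_\epsilon$ solves $P_{\tilde g}\hat u_\epsilon=O(\epsilon^4)$ and tends to $0$ as $\epsilon\to 0$, so $u_{0\epsilon}=\varphi\hat u_\epsilon$ is \emph{not} a small $C^2$-perturbation of $\varphi$ there; rather, $u_{0\epsilon}\to 0$ away from $x_0$, and positivity of $R_{g_0}$ alone gives no pointwise control on the sign of $-\frac{4(n-1)}{n-2}\Delta_{g_0}v+R_{g_0}v$ for a small, nonconstant $v$. The paper does not attempt a direct decomposition; it simply cites Lemma 4.6 of Gursky--Malchiodi \cite{gur_mal}, which establishes $R_{\bar g}>0$ for exactly this conformal factor under hypothesis \eqref{gm_pos}. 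Replacing your heuristic by that citation (or by reproducing Gursky--Malchiodi's argument, which does not reduce to a two-region perturbation) would close the gap.
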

\begin{proof} Define
$$\mathcal{F}_g[u]=\frac{\int_{M^n}uP_g u d\mu_g}{\Big(\int_{M^n}u^{2n \over n-4}d\mu_g\Big)^{n-4 \over n}}. $$

From \cite{chen} or \cite{er}, for sufficiently small $\epsilon>0$,  one has
\begin{eqnarray}
\mathcal{F}_{\tilde{g}}[u_\epsilon] &\leq& q(S^n)(1-O(\epsilon^4 |\log \epsilon| |W_{g_0}(x_0)|^2)) \hbox{~~if~~} n=8;\no\\
\mathcal{F}_{\tilde{g}}[u_\epsilon] &\leq& q(S^n)(1-O(\epsilon^4 |W_{g_0}(x_0)|^2)) \hbox{\qquad\quad if~~} n\geq9.\label{hd_energy_est}
\end{eqnarray}
Also, $\mathcal{F}_{\tilde{g}}[u_\epsilon]=\mathcal{F}_{g_0}[u_\epsilon \varphi]$ by conformal invariance of Paneitz operator. Let $v_\epsilon=\hat{u}_\epsilon-u_\epsilon$, Lemma 4.3 in \cite{gur_mal} gives
\begin{eqnarray}
|v_\epsilon|&\leq& C \log \frac{1}{\epsilon^2+|x|^2} \hbox{~~~if~~} n=8;\no\\
|v_\epsilon|&\leq& C (\epsilon^2+|x|^2)^{8-n \over 2} \hbox{~~if~~} n \geq 9.\label{est_v_epsilon}
\end{eqnarray}
The same computations in the proof of Lemma 4.4 in \cite{gur_mal} yield
$$\int_{M^n}\hat{u}_\epsilon P_{\tilde g}\hat{u}_\epsilon d\mu_{\tilde g}=\int_{M^n}u_\epsilon P_{\tilde g}u_\epsilon d\mu_{\tilde g}+2b_n \epsilon^4 \int_{M^n}u_\epsilon^{n+4 \over n-4}v_\epsilon d\mu_{\tilde g}+O(1)$$
and
$$\int_{M^n}u_\epsilon P_{\tilde g} u_\epsilon d\mu_{\tilde g}=b_n \epsilon^4(1+o_\epsilon(1))\int_{M^n}u_\epsilon^{2n \over n-4}d\mu_{\tilde g}.$$

One simple observation is that the covariant derivatives of $f$ with respect to metric $g_0$ at $x_0$ vanish up to order $m \geq 1$, then its covariant derivatives of $f$ with respect to conformal metric $g$ of $g_0$ at $x_0$ vanish up to the same order.
Then, from condition \eqref{higher-dim}, near $x_0$ there holds
$$f(x)=f(x_0)+\tfrac{1}{(n-3)!}\nabla_{i_1 \cdots i_{n-3}} f(x_0)x^{i_1}\cdots x^{i_{n-3}}+O(r^{n-2}).$$
In $B_\delta(x_0)$, from \eqref{est_v_epsilon} there holds $v_\epsilon \leq C u_\epsilon$, then
$$\Big||u_\epsilon+v_\epsilon|^{2n \over n-4}-u_\epsilon^{2n \over n-4}-\frac{2n}{n-4}u_\epsilon^{n+4 \over n-4}v_\epsilon\Big| \leq Cu_\epsilon^{8 \over n-4}v_\epsilon^2.$$
Putting these facts together, we obtain
\begin{eqnarray*}
\int_{M^n}f \hat{u}_\epsilon^{2n \over n-4}d\mu_{\tilde g}&=&\int_{M^n}f |u_\epsilon+v_\epsilon|^{2n \over n-4}d\mu_{\tilde g}\\
&=&\int_{B_\delta(x_0)}f\Big(u_\epsilon^{2n \over n-4}+\frac{2n}{n-4}u_\epsilon^{n+4 \over n-4}v_\epsilon+O(u_\epsilon^{8 \over n-4}v_\epsilon^2)\Big)d\mu_{\tilde g}+O(1)\\
&=&f(x_0)\int_{B_\delta(x_0)}\Big(u_\epsilon^{2n \over n-4}+\frac{2n}{n-4}u_\epsilon^{n+4 \over n-4}v_\epsilon+O(u_\epsilon^{8 \over n-4}v_\epsilon^2)\Big)d\mu_{\tilde g}+O(1)\\
&&+\int_{B_\delta}O(|x|^{n-3})\Big(u_\epsilon^{2n \over n-4}+\frac{2n}{n-4}u_\epsilon^{n+4 \over n-4}v_\epsilon+O(u_\epsilon^{8 \over n-4}v_\epsilon^2)\Big)d\mu_{\tilde g}.
\end{eqnarray*}
Now we set $u_{0\epsilon}=\varphi \hat{u}_\epsilon$. Combining the above estimates and the following asymptotics
\begin{eqnarray*}
&&\int_{M^n}u_\epsilon^{n+4 \over n-4} v_\epsilon d\mu_{\tilde g}
=\left\{\begin{array}{ll}
O(\epsilon^{-4}|\log \epsilon|) &\hbox{~~if~~} n=8;\\
O(\epsilon^{4-n}) &\hbox{~~if~~} n \geq 9;
\end{array}
\right.\\
&&\int_{M^n}u_\epsilon^{8 \over n-4} v_\epsilon^2 d\mu_{\tilde g}
=\left\{\begin{array}{ll}
O(|\log \epsilon|^3) &\hbox{~~if~~} n=8;\\
O(\epsilon^{8-n}) &\hbox{~~if~~} n \geq 9;
\end{array}
\right.\\
&& \int_{M^n}|x|^{n-3} u_\epsilon^{2n \over n-4}d\mu_{\tilde g}=O(\epsilon^{-3});\\
&&\int_{M^n}|x|^{n-3} u_\epsilon^{n+4 \over n-4}v_\epsilon d\mu_{\tilde g}
=\left\{\begin{array}{ll}
O(|\log \epsilon|^2) &\hbox{~~if~~} n=8;\\
O(1) &\hbox{~~if~~} n \geq 9;
\end{array}
\right.\\
&& \int_{M^n}|x|^{n-3} u_\epsilon^{8 \over n-4}v_\epsilon^2 d\mu_{\tilde g}=\left\{\begin{array}{ll}
O(|\log \epsilon|^4) &\hbox{~~if~~} n=8;\\
O(1) &\hbox{~~if~~} n \geq 9;
\end{array}
\right.\\
\end{eqnarray*}
we conclude that
\begin{eqnarray}
E_f[u_{0\epsilon}]&=&\frac{\int_{M^n}\hat{u}_\epsilon P_{\tilde g}\hat{u}_\epsilon d\mu_{\tilde g}}{\Big(\int_{M^n}f\hat{u}_\epsilon^{2n \over n-4}d\mu_{\tilde g}\Big)^{n-4 \over n}}\no\\
&=& \frac{\mathcal{F}_{\tilde g}[u_\epsilon]}{(\max_{M^n}f)^{n-4 \over n}} \frac{1+\tfrac{2 b_n \epsilon^4 \int_{M^n} u_\epsilon^{n+4 \over n-4}v_\epsilon d\mu_{\tilde g}}{\int_{M^n} u_\epsilon P_{\tilde g} u_\epsilon d\mu_{\tilde g}}+O_{\delta}(\epsilon^n)}{\Big(1+\tfrac{2n}{n-4}\tfrac{\int_{M^n} u_\epsilon^{n+4 \over n-4}v_\epsilon d\mu_{\tilde g}}{\int_{M^n} u_\epsilon^{2n \over n-4}d\mu_{\tilde g}}+ \tfrac{\int_{M^n} O(u_\epsilon^{8 \over n-4}v_\epsilon^2)d\mu_{\tilde g}}{\int_{M^n} u_\epsilon^{2n \over n-4}d\mu_{\tilde g}}+O_{\delta}(\epsilon^{n-3})\Big)^{n-4 \over n}}\no\\
&=&\frac{\mathcal{F}_{\tilde g}[u_\epsilon]}{(\max_{M^n}f)^{n-4 \over n}} \frac{1+\tfrac{2 \int_{M^n} u_\epsilon^{n+4 \over n-4}v_\epsilon d\mu_{\tilde g}}{(1+o_\epsilon(1))\int_{M^n} u_\epsilon^{2n \over n-4} d\mu_{\tilde g}}+O_{\delta}(\epsilon^n)}{\Big(1+\tfrac{2n}{n-4}\tfrac{\int_{M^n} u_\epsilon^{n+4 \over n-4}v_\epsilon d\mu_{\tilde g}}{\int_{M^n} u_\epsilon^{2n \over n-4}d\mu_{\tilde g}}+ \tfrac{\int_{M^n} O(u_\epsilon^{8 \over n-4}v_\epsilon^2)d\mu_{\tilde g}}{\int_{M^n} u_\epsilon^{2n \over n-4}d\mu_{\tilde g}}+O_{\delta}(\epsilon^{n-3})\Big)^{n-4 \over n}}\no\\
&=& \frac{\mathcal{F}_{\tilde g}[u_\epsilon]}{(\max_{M^n}f)^{n-4 \over n}}\Big(1+O_{\delta}(\epsilon^{\min\{8,n-3\}})\Big). \label{initial_data_est_nlcf_hd}
\end{eqnarray}
From \eqref{initial_data_est_nlcf_hd}, \eqref{hd_energy_est} together with the above estimates, we conclude the first assertion. 

By equation \eqref{hat_u_epsilon} and conformal invariance of Paneitz operator, one has
\begin{eqnarray*}
 P_{g_0}(u_{0 \epsilon})=P_{\tilde g}(\hat u_\epsilon)\varphi^{n+4 \over n-4}=\frac{b_n\epsilon^4 \chi_\delta(x)}{(\epsilon^2+|x|^2)^{n+4 \over 2}}\varphi^{n+4 \over n-4} \geq 0.
\end{eqnarray*}
Together with $Q_{g_0}\geq 0$ not identically zero, under either Gursky-Malchiodi's hypotheses \eqref{gm_pos} or Hang-Yang's \eqref{hy_pos}, the strong maximum principle for Paneitz operator (cf. \cite{gur_mal} Theorem 2.2 or \cite{hy2} Proposition 3.1, respectively) shows that $u_{0\epsilon}>0$ and then $\hat{u}_\epsilon>0$. Again by conformal invariance of Paneitz operator, the identity
$$\frac{n-4}{2} Q_{\bar g}=u_{0\epsilon}^{-\frac{n+4}{n-4}}P_{g_0}(u_{0\epsilon})=\frac{b_n\epsilon^4 \chi_\delta(x)}{(\epsilon^2+|x|^2)^{n+4 \over 2}}\hat{u}_\epsilon^{-\frac{n+4}{n-4}} ,$$
which implies that $Q_{\bar g} \geq 0$ is not identically zero.The positivity of $R_{\bar g}$ under hypotheses \eqref{gm_pos} follows from Lemma 4.6 in \cite{gur_mal}. This completes the proof.
\end{proof}

\subsection{$\mathbf{(M^n,g_0): n=5,6,7}$ or $\mathbf{n \geq 8}$ locally conformally flat}

\begin{lemma}\label{initial_ld_lcf}
Let $(M^n, g_0)$ be a closed manifold of dimensions $n=5,6,7$ or locally conformally flat. Suppose either the semi positivity hypotheses \eqref{gm_pos} or \eqref{hy_pos} holds. Let $f$ be a smooth positive function on $M^n$. Moreover, for $n \geq 5$, let $x_0$ denote a maximum point of $f$, assume
\begin{eqnarray}
 &\hbox{if}& n=6, ~~\nabla_{g_0}^2 f(x_0)=0;\label{six-dim}\\
 &\hbox{if}& n=7,~~\nabla_{g_0}^l f(x_0)=0 \hbox{~~for~~}2 \leq l \leq 3;\label{seven-dim}\\
 &\hbox{if}& n \geq 8 \hbox{~and~}g_0 \hbox{ is locally conformally flat},\nabla_{g_0}^l f(x_0)=0 \hbox{~~for~~} 2 \leq l \leq n-4. \label{lcf-higherdim}
\end{eqnarray}
Then for sufficiently small $\epsilon>0$, there exist a positive function $u_{0\epsilon}$ and a positive constant $C_{x_0,f,n}$ such that
$$E_f[u_{0\epsilon}] \leq \frac{q(S^n)}{(\max_{M^n}f)^{n-4 \over n}}-C_{x_0,f,n}\epsilon.$$
Moreover, a  conformal metric $\bar g=u_{0 \epsilon}^{4 \over n-4} g_0$ enjoys the property that $Q_{\bar g}\geq 0$ and positive somewhere. In addition, under assumptions \eqref{gm_pos}, the scalar curvature $R_{\bar g}$ is positive.
\end{lemma}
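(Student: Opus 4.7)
The plan is to adapt the construction from Lemma \ref{initial_hd_nlcf} but, since the Weyl tensor vanishes (in the LCF case) or the dimension is low, to extract the negative correction in $E_f[u_{0\epsilon}]$ from the Positive Mass Theorem for the Paneitz operator instead of from a Weyl-tensor term. As before, I would pick $\tilde g=\varphi^{4/(n-4)}g_0$ in Lee-Parker conformal normal coordinates about the maximum point $x_0$ of $f$, taking $\tilde g$ to be flat near $x_0$ in the locally conformally flat case, and use the standard bubble $u_\epsilon$ together with its correction $\hat u_\epsilon$ solving \eqref{hat_u_epsilon}. The candidate initial datum is $u_{0\epsilon}=\varphi\hat u_\epsilon$, and the positivity/$Q_{\bar g}\ge 0$/$R_{\bar g}>0$ claims will follow from exactly the same strong-maximum-principle argument (Gursky--Malchiodi Theorem 2.2 or Hang--Yang Proposition 3.1, respectively Lemma 4.6 of \cite{gur_mal} for $R_{\bar g}$) as in the closing paragraph of Lemma \ref{initial_hd_nlcf}; nothing new is needed there.

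The analytic core is the energy bound. First I would establish the numerator estimate. Because we are in dimensions $5,6,7$ or the LCF case, the Paneitz mass is well-defined and positive by the Positive Mass Theorem (\cite{gur_mal} Proposition 2.5 or \cite{hr} Theorem 1.1), so the Gursky--Malchiodi expansion of $\int_{M^n}\hat u_\epsilon P_{\tilde g}\hat u_\epsilon\,d\mu_{\tilde g}$ yields a strictly negative leading correction of order $\epsilon^{n-4}$ (a factor of $|\log\epsilon|$ at $n=8$):
\begin{equation*}
\mathcal{F}_{\tilde g}[\hat u_\epsilon]\;\le\;q(S^n)\bigl(1-C\,m_{g_0}(x_0)\,\epsilon^{n-4}+o(\epsilon^{n-4})\bigr),
\end{equation*}
with $m_{g_0}(x_0)>0$. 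This replaces the Weyl-tensor lower bound used in Lemma \ref{initial_hd_nlcf}.

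Next I would handle the denominator $\int_{M^n} f\hat u_\epsilon^{2n/(n-4)}\,d\mu_{\tilde g}$. The vanishing hypotheses \eqref{six-dim}--\eqref{lcf-higherdim}, together with $\nabla_{g_0}f(x_0)=0$ at a maximum and the fact that covariant derivatives of $f$ transform trivially at $x_0$ under conformal change, give the Taylor expansion $f(x)=f(x_0)+O(r^{n-3})$ near $x_0$. Substituting $\hat u_\epsilon=u_\epsilon+v_\epsilon$ with the Gursky--Malchiodi bound $|v_\epsilon|\le C(\epsilon^2+|x|^2)^{(8-n)/2}$ (and the same logarithmic variant at $n=8$) and using the asymptotics for $\int u_\epsilon^{2n/(n-4)}$, $\int u_\epsilon^{(n+4)/(n-4)}v_\epsilon$, $\int u_\epsilon^{8/(n-4)}v_\epsilon^2$ and their weighted versions listed in Lemma \ref{initial_hd_nlcf}, I would show that all contributions of $f-f(x_0)$ are of strictly smaller order than the $\epsilon^{n-4}$ mass term. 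Dividing numerator by denominator, raising to the power $(n-4)/n$ and reabsorbing cross terms (exactly as in \eqref{initial_data_est_nlcf_hd}) yields
\begin{equation*}
E_f[u_{0\epsilon}]\;\le\;\frac{q(S^n)}{(\max_{M^n}f)^{(n-4)/n}}-C_{x_0,f,n}\,\epsilon^{n-4}+o(\epsilon^{n-4}),
\end{equation*}
and since $n\ge 5$ this is stronger than the stated bound $-C_{x_0,f,n}\epsilon$ after shrinking $\epsilon$ and the constant.

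The main obstacle is really bookkeeping: one must verify that the perturbative term $v_\epsilon$ and the remainder $f-f(x_0)=O(r^{n-3})$ interact to produce errors strictly dominated by the positive mass contribution. This is precisely why the vanishing orders in \eqref{six-dim}--\eqref{lcf-higherdim} are tuned to $n-4$; any lower order of vanishing would inject a term of size $\gtrsim\epsilon^{n-4}$ into the denominator expansion, killing the sign of the final correction. Once the expansion is set up with these vanishing orders, the computation is parallel to Lemma \ref{initial_hd_nlcf} with the Weyl-tensor input replaced by the Paneitz-mass input, and the positivity statements for $u_{0\epsilon}$, $Q_{\bar g}$, and $R_{\bar g}$ carry over verbatim.
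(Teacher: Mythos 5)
Your high-level strategy is the right one and mostly parallels the paper: replace the Weyl-tensor correction used in Lemma \ref{initial_hd_nlcf} with the Positive Mass Theorem contribution, and then absorb the $f$-weighting in the denominator via the vanishing orders \eqref{six-dim}--\eqref{lcf-higherdim}. Where you actually diverge from the paper is in the bookkeeping: you propose to quote a ready-made estimate $\mathcal{F}_{\tilde g}[\hat u_\epsilon]\le q(S^n)(1-C\,m\,\epsilon^{n-4}+o(\epsilon^{n-4}))$ and then simply correct the denominator for the weight $f$, whereas the paper re-runs the expansion explicitly through the constant $\beta=\alpha_{x_0}/c_n$ in both numerator and denominator and observes the net negative sign from the mismatch of coefficients $1$ versus $\tfrac{2n}{n-4}\cdot\tfrac{n-4}{n}=2$ (see \eqref{initial_data_est_5_6_7}). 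Your route is cleaner if the cited estimate can be taken off the shelf, but it must be paired with the \emph{correct} description of $\hat u_\epsilon$ near $x_0$, and here there is a genuine error.

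The bound you invoke, $|v_\epsilon|\le C(\epsilon^2+|x|^2)^{(8-n)/2}$ from Gursky--Malchiodi Lemma 4.3, is proved only for the case $n\ge 8$ with $W_{g_0}(x_0)\neq 0$; it is used in Lemma \ref{initial_hd_nlcf} but it is simply false in the regime of the present lemma. In dimensions $5,6,7$ or in the locally conformally flat case, the correction $v_\epsilon=\hat u_\epsilon-u_\epsilon$ does \emph{not} decay to zero near $x_0$; instead, by the Green's function expansion $G_{\tilde g}(x_0,x)=c_n\,d_{\tilde g}(x_0,x)^{4-n}+\alpha_{x_0}+O(r)$ one has $v_\epsilon\to\beta>0$ there, a strictly positive constant. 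In particular for $n\ge 9$ (LCF) your proposed bound forces $v_\epsilon(x_0)\to 0$ as $\epsilon\to 0$, which contradicts positivity of the mass; for $n<8$ the exponent changes sign and the bound degenerates. Consequently the asymptotics $\int u_\epsilon^{(n+4)/(n-4)}v_\epsilon\,d\mu$, $\int u_\epsilon^{8/(n-4)}v_\epsilon^2\,d\mu$ that you import from Lemma \ref{initial_hd_nlcf} do not hold here. The correct technical input is GM Lemma 5.3 and the decomposition $\check u_\epsilon=\chi_{\tilde\delta}(u_\epsilon+\beta)+(1-\chi_{\tilde\delta})\bar G_{x_0}$ with $|\hat u_\epsilon-\check u_\epsilon|=o_{\tilde\delta}(1)$, as the paper uses; with $\hat u_\epsilon\approx u_\epsilon+\beta$ near $x_0$ and $|f-f(x_0)|\lesssim r^{n-3}$ the denominator correction is then $O(\epsilon^{n-3})$, which is lower order than the mass term $O(\epsilon^{n-4})$.

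Two smaller slips: the $|\log\epsilon|$ factor at $n=8$ belongs to the Weyl-tensor case of Lemma \ref{initial_hd_nlcf}, not to the Paneitz-mass contribution in the LCF case, so it should not appear in your numerator estimate here; and a bound of the form $-C\epsilon^{n-4}$ is \emph{not} stronger than $-C\epsilon$ for $n\ge 6$ and small $\epsilon$ (it is weaker), though this is harmless for the application, since all that is used downstream is $E_f[u_{0\epsilon}]<q(S^n)/(\max f)^{(n-4)/n}$ for some small $\epsilon$. The positivity assertions for $u_{0\epsilon}$, $Q_{\bar g}$ and $R_{\bar g}$ carry over from Lemma \ref{initial_hd_nlcf} exactly as you say.
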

\begin{proof}
We set $u_{0\epsilon}=\varphi \hat{u}_\epsilon$. Let $G_{\tilde{g}}(x_0,\cdot)$ denote the Green's function with pole at $x_0$, from the Positive Mass Theorem for Paneitz operator in dimensions $n=5,6,7$ (cf. \cite{gur_mal} Proposition 2.5) or $(M^n,g_0)$ is locally conformally flat (cf. \cite{hr} Theorem 1.1), $G_{\tilde{g}}(x_0,\cdot)$ has the expansion of 
$$G_{\tilde g}(x_0,x)=c_n d_{\tilde g}(x_0,x)^{4-n}+\alpha_{x_0}+O(r),$$ 
where $r=d_{\tilde g}(x_0,x)$ and $\alpha_{x_0}>0$ is the mass. 
\begin{remark}\label{Rm_lcf_PMT}
In the latter case, just as pointed out by Humbert-Raulot in their paper \cite{hr}, under the assumptions of our Theorem \ref{main_Thm}, the condition ``$(M^n,g_0)$ is locally conformally flat" can not be weaken to ``$g_0$ is conformally flat around one (maximum) point $x_0$", since their proof of Positive Mass Theorem for Paneitz operator involves the standard Schoen-Yau's Positve Mass Theorem \cite{sy} for conformal Laplacian. We wonder whether some more restrictions on Weyl tensor at some maximum point of $f$ will be helpful to construct initial data in this case or not, for instance, just like the one in \cite{hv} for scalar curvature case.
\end{remark}
Let $\beta=\frac{\alpha_{x_0}}{c_n}$, then applying the computations on page 36 in \cite{gur_mal} together with conformal invariance of Paneitz operator to show
\begin{eqnarray*}
E[u_{0\epsilon}]&=&\int_{M^n}\hat{u}_\epsilon P_{\tilde g} \hat u_\epsilon d\mu_{\tilde g}\\
&=&b_n \epsilon^4\Big(\int_{M^n}u_\epsilon^{2n \over n-4}d\mu_{\tilde g}+\beta (1+o(1))\int_{M^n}u_\epsilon^{n+4 \over n-4}d\mu_{\tilde g}+O(1)\Big).
\end{eqnarray*}

It remains to estimate $\int_{M^n} f\hat{u}_\epsilon^{2n \over n-4}d\mu_{\tilde{g}}$. For some $0<\tilde{\delta}<< \delta$, a cut-off function $\chi_{\tilde \delta}(x)$ may similarly defined as $\chi_\delta(x)$ before. Set
$$\check{u}_\epsilon=\chi_{\tilde \delta}(u_\epsilon+\beta)+(1-\chi_{\tilde \delta})\bar G_{x_0} \hbox{~~with~~} \bar{G}_{x_0}=\frac{G_{\tilde g}(x_0,\cdot)}{c_n},$$
and rewrite $\hat u_\epsilon$ as
$$\hat{u}_\epsilon=\chi_{\tilde \delta}(u_\epsilon+\beta)+(1-\chi_{\tilde \delta})\bar G_{x_0}+\hat{u}_\epsilon-\check{u}_\epsilon.$$

Since in $B_{\tilde \delta}(x_0)$, $\beta$ is bounded by $u_\epsilon$, we have
\begin{eqnarray*}
&&\int_{B_{\tilde \delta}(x_0)}(u_\epsilon+\beta)^{2n \over n-4}d\mu_{\tilde g}\\
&=&\int_{M^n}u_\epsilon^{2n \over n-4}d\mu_{\tilde g}+\frac{2n}{n-4}\beta\int_{M^n}u_\epsilon^{n+4 \over n-4}d\mu_{\tilde g}+\beta^2 \int_{M^n}O(u_\epsilon^{8 \over n-4})d\mu_{\tilde g}+O(1),
\end{eqnarray*}

For this purpose, we estimate $\int_{M^n} f\hat{u}_\epsilon^{2n \over n-4}d\mu_{\tilde{g}}$ and also $E_f[u_{0\epsilon}]$ by dividing into two cases. 

(a) In dimensions $n=5,6,7$, from conditions \eqref{six-dim} and \eqref{seven-dim}, in a neighborhood of $x_0$ there holds
$$f(x)=f(x_0)+\tfrac{1}{(n-3)!}\nabla_{i_1\cdots i_{n-3}} f(x_0)x^{i_1}\cdots x^{i_{n-3}}+O(r^{n-2}).$$
Then, together with
$$|\hat{u}_\epsilon-\check{u}_\epsilon|=o_{\tilde \delta}(1)$$
in view of Lemma 5.3 in \cite{gur_mal}, we estimate
\begin{eqnarray*}
&&\int_{M^n}f\hat{u}_\epsilon^{2n \over n-4}d\mu_{\tilde g}\\
&=&\int_{B_{\tilde \delta}(x_0)}f(u_\epsilon+\beta)^{2n \over n-4}d\mu_{\tilde g}+O(1)\\
&=&\int_{B_{\tilde \delta(x_0)}}(f(x_0)+\tfrac{1}{(n-3)!}\nabla_{i_1\cdots i_{n-3}} f(x_0)x^{i_1}\cdots x^{i_{n-3}}+O(r^{n-2}))(u_\epsilon+\beta)^{2n \over n-4}d\mu_{\tilde g}+O(1).
\end{eqnarray*}
Since $\beta \leq C u_\epsilon$ in $B_{\tilde{\delta}}(x_0)$, then
\begin{eqnarray*}
&&\frac{1}{(n-3)!}\int_{B_{\tilde \delta}(x_0)}\nabla_{i_1\cdots i_{n-3}} f(x_0)x^{i_1}\cdots x^{i_{n-3}}(u_\epsilon+\beta)^{2n \over n-4} d\mu_{\tilde g}\\
&=&O(1)\int_{B_{\tilde \delta}(x_0)}|x|^{n-3}(u_\epsilon+\beta)^{2n \over n-4}d\mu_{\tilde g}\\
&=&O(1)\Big[\int_{M^n}|x|^{n-3} u_\epsilon^{2n \over n-4}d\mu_{\tilde g}+\frac{2n}{n-4}\beta\int_{M^n}|x|^{n-3} u_\epsilon^{n+4 \over n-4}d\mu_{\tilde g}\\
&&+\beta^2 \int_{M^n}O(|x|^{n-3} u_\epsilon^{8 \over n-4})d\mu_{\tilde g}+O(1)\Big].
\end{eqnarray*}
In particular, for dimension $n=5$, more precisely we have
\begin{eqnarray*}
&&\frac{1}{2}\int_{B_{\tilde \delta}(x_0)}\nabla_{ij} f(x_0)x^ix^j(u_\epsilon+\beta)^{2n \over n-4}d\mu_{\tilde g}\\
&=&\frac{1}{2n}\Delta f(x_0) \int_{B_{\tilde \delta}(x_0)}|x|^2(u_\epsilon+\beta)^{2n \over n-4}d\mu_{\tilde g}\\
&=&\frac{1}{2n}\Delta f(x_0)\Big[\int_{M^n}|x|^2 u_\epsilon^{2n \over n-4}d\mu_{\tilde g}+\frac{2n}{n-4}\beta\int_{M^n}|x|^2 u_\epsilon^{n+4 \over n-4}d\mu_{\tilde g}\\
&&+\beta^2 \int_{M^n}O(|x|^2 u_\epsilon^{8 \over n-4})d\mu_{\tilde g}+O(1)\Big].
\end{eqnarray*}
and the following asymptotics hold
\begin{eqnarray*}
\int_{M^n}|x|^{n-3} u_\epsilon^{2n \over n-4}d\mu_{\tilde g}&=&
O(\epsilon^{-3}) ;\\
\int_{M^n}|x|^{n-3} u_\epsilon^{n+4 \over n-4}d\mu_{\tilde g}
&=&\left \{\begin{array}{ll}
O(\epsilon^{-2}) &\hbox{~~if~~} n=5;\\
O(\epsilon^{-1}) &\hbox{~~if~~} n=6;\\
O(|\log \epsilon|) &\hbox{~~if~~} n=7;
\end{array}
\right.\\
\int_{M^n}|x|^{n-3} u_\epsilon^{8 \over n-4}d\mu_{\tilde g}&=&\left \{\begin{array}{ll}
O(\epsilon^{-1}) &\hbox{~~if~~} n=5;\\
O(1) &\hbox{~~if~~} n=6,7.
\end{array}
\right.
\end{eqnarray*}

Therefore, putting these facts above together, we obtain
\begin{eqnarray}
&&E_f[u_{0\epsilon}]\no\\
&=&\frac{\int_{M^n}\hat{u}_\epsilon P_{\tilde g}\hat{u}_\epsilon d\mu_{\tilde g}}{\Big(\int_{M^n}f\hat{u}_\epsilon^{2n \over n-4}d\mu_{\tilde g}\Big)^{n-4 \over n}}\no\\
&=& \frac{b_n \epsilon^4 \Big(\int_{M^n} u_\epsilon^{2n \over n-4}d\mu_{\tilde g}\Big)^{4 \over n}}{(\max_{M^n}f)^{n-4 \over n}} \frac{1+\beta(1+o_{\tilde \delta}(1))\tfrac{\int_{M^n} u_\epsilon^{n+4 \over n-4}d\mu_{\tilde g}}{\int_{M^n} u_\epsilon^{2n \over n-4}d\mu_{\tilde g}}+O_{\tilde \delta}(\epsilon^n)}{\Big(1+\tfrac{2n}{n-4}\beta\tfrac{\int_{M^n} u_\epsilon^{n+4 \over n-4}d\mu_{\tilde g}}{\int_{M^n} u_\epsilon^{2n \over n-4}d\mu_{\tilde g}}+\beta^2 \tfrac{\int_{M^n} O(u_\epsilon^{8 \over n-4})d\mu_{\tilde g}}{\int_{M^n} u_\epsilon^{2n \over n-4}d\mu_{\tilde g}}+O_{\tilde \delta}(\epsilon^{n-3})\Big)^{n-4 \over n}}\no\\
&=& \frac{q(S^n)}{(\max_{M^n}f)^{n-4 \over n}}\Big(1-\beta(1+o(1))\tfrac{\int_{M^n} u_\epsilon^{n+4 \over n-4}d\mu_{\tilde g}}{\int_{M^n} u_\epsilon^{2n \over n-4}d\mu_{\tilde g}}+O_{\tilde \delta}(\epsilon^{n-3})\Big).\label{initial_data_est_5_6_7}
\end{eqnarray}

(b) In dimension $n \geq 8$ and $(M^n,g_0)$ is locally conformally flat. From condition \eqref{lcf-higherdim}, near $x_0$, there holds
$$f(x)=f(x_0)+\tfrac{1}{(n-3)!}\nabla_{i_1 \cdots i_{n-3}} f(x_0)x^{i_1}\cdots x^{i_{n-3}}+O(r^{n-2}).$$
Together with the asymptotics
\begin{eqnarray*}
&& \int_{M^n}|x|^{n-3} u_\epsilon^{2n \over n-4}d\mu_{\tilde g}=O(\epsilon^{-3}),  \int_{M^n}|x|^{n-3} u_\epsilon^{n+4 \over n-4}d\mu_{\tilde g}=O(1),\\
&& \int_{M^n}|x|^{n-3} u_\epsilon^{8 \over n-4}d\mu_{\tilde g}=O(1),
\end{eqnarray*}
also a similar argument as in (a) yields
\begin{eqnarray}
&&E_f[u_{0\epsilon}]\no\\
&=&\frac{\int_{M^n}\hat{u}_\epsilon P_{\tilde g}\hat{u}_\epsilon d\mu_{\tilde g}}{\Big(\int_{M^n}f\hat{u}_\epsilon^{2n \over n-4}d\mu_{\tilde g}\Big)^{n-4 \over n}}\no\\
&=& \frac{b_n \epsilon^4 \Big(\int_{M^n} u_\epsilon^{2n \over n-4}d\mu_{\tilde g}\Big)^{4 \over n}}{(\max_{M^n}f)^{n-4 \over n}} \frac{1+\beta(1+o_{\tilde \delta}(1))\tfrac{\int_{M^n} u_\epsilon^{n+4 \over n-4}d\mu_{\tilde g}}{\int_{M^n} u_\epsilon^{2n \over n-4}d\mu_{\tilde g}}+O_{\tilde \delta}(\epsilon^n)}{\Big(1+\tfrac{2n}{n-4}\beta\tfrac{\int_{M^n} u_\epsilon^{n+4 \over n-4}d\mu_{\tilde g}}{\int_{M^n} u_\epsilon^{2n \over n-4}d\mu_{\tilde g}}+\beta^2 \tfrac{\int_{M^n} O(u_\epsilon^{8 \over n-4})d\mu_{\tilde g}}{\int_{M^n} u_\epsilon^{2n \over n-4}d\mu_{\tilde g}}+O_{\tilde \delta}(\epsilon^{n-3})\Big)^{n-4 \over n}}\no\\
&=& \frac{q(S^n)}{(\max_{M^n}f)^{n-4 \over n}}\Big(1-\beta(1+o(1))\tfrac{\int_{M^n} u_\epsilon^{n+4 \over n-4}d\mu_{\tilde g}}{\int_{M^n} u_\epsilon^{2n \over n-4}d\mu_{\tilde g}}+O_{\tilde \delta}(\epsilon^{n-3})\Big).\label{initial_data_est_lcf_hd}
\end{eqnarray}

In conclusion, for dimensions $n=5,6,7$ or $n \geq 8$ and $(M^n,g_0)$ is not conformally flat near $x_0$, we conclude from \eqref{initial_data_est_5_6_7} and \eqref{initial_data_est_lcf_hd} that
$$E_f[u_{0\epsilon}]=\frac{q(S^n)}{(\max_{M^n}f)^{n-4 \over n}}\big(1-O(\epsilon)\big).$$

The proof of the remained assertions is the same as in the proof of Lemma \ref{initial_hd_nlcf}.
\end{proof}

\section{Sequential convergence of the nonlocal $\mathbf{Q}$-curvature flow}\label{sect7}


\indent \indent In this section, we finish the proof of Theorem \ref{main_Thm} by showing the time sequential convergence of the nonlocal $Q$-curvature flow \eqref{gm_Q_flow_orig}-\eqref{constraint_factor} to a positive solution of $Q$-curvature equation \eqref{prescribed_Q-curvature}.\\

\noindent{\bf \underline {Proof of Theorem \ref{main_Thm}.}}

Under the assumptions of Theorem \ref{main_Thm}, using initial data which are constructed in Lemmas \ref{initial_hd_nlcf} and \ref{initial_ld_lcf}, in both (a) $n=5,6,7$ or $(M^n,g_0)$ is locally conformally flat, and (b) $n \geq 8$, Weyl tensor at $x_0$  is nonzero cases, we obtain
$$E_f[u_0] \leq \frac{q(S^n)}{(\max_{M^n}f)^{n-4 \over n}}-\epsilon_0,$$
for some small $\epsilon_0>0$. Then by Sobolev inequality on compact manifolds, for any given $\delta>0$ one has
\begin{eqnarray*}
&&\Big(\int_{M^n}u(t)^{2n \over n-4}d\mu_{g_0}\Big)^{n-4 \over n}\\
&\leq& (q(S^n)^{-1}+\delta)\int_{M^n}u(t)P_{g_0}u(t)d\mu_{g_0}+C(M^n,\delta)\int_{M^n}u(t)^2 d\mu_{g_0)}\\
&\leq&(q(S^n)^{-1}+\delta)E_f[u_0]\Big(\int_{M^n}fu(t)^{2n \over n-4}d\mu_{g_0}\Big)^{n-4 \over n}+C(M^n,\delta)\int_{M^n}u(t)^2 d\mu_{g_0}\\
&\leq&(q(S^n)^{-1}+\delta)(\max_{M^n}f)^{n-4 \over n}\Big[\tfrac{q(S^n)}{(\max_{M^n}f)^{n-4 \over n}}-\epsilon_0\Big]\Big(\int_{M^n}u(t)^{2n \over n-4}d\mu_{g_0}\Big)^{n-4 \over n}\\
&&+C(M^n,\delta)\int_{M^n}u(t)^2 d\mu_{g_0}
\end{eqnarray*}
By choosing $\delta=\frac{(\max_{M^n}f )^{n-4 \over n}}{2q(S^n)^2}\epsilon_0$, we obtain
\begin{equation}\label{lb_L^2}
\int_{M^n}u(t)^2 d\mu_{g_0}\geq C \Big(\int_{M^n} u(t)^{2n \over n-4}d\mu_g\Big)^{n-4 \over n} \geq C_0>0
\end{equation}
where we have used the uniform boundedness of the volume of the flow metric by Lemma \ref{bd_alpha}.

From Lemmas \ref{cons_energy}, \ref{bd_alpha_t}, \ref{asym_F_2} and Hardy-Littlewood-Sobolev inequality on compact manifolds, there exist sequences of $\{t_k\}$ with $t_k \to \infty$ as $k \to \infty$ and $\{u_k=u(t_k,\cdot)\}$, such that up to a subsequence as $k \to \infty$
\begin{eqnarray*}
&&\alpha(t_k) \to \alpha_\infty;\\
&&u_k \to u_\infty \hbox{~~weakly in ~~} H^2(M^n,g_0) \hbox{~~and strongly in~~} L^2(M^n,g_0);\\
&& -u_k+\tfrac{n-4}{2}\alpha_\infty P_{g_0}^{-1}(f u_k^{n+4 \over n-4}) \to 0 \hbox{~~strongly in~~} H^2(M^n,g_0).
\end{eqnarray*}
Thus, it also yields $u_\infty \geq 0$ and $u_\infty$ is a strong solution of
$$u_\infty=\tfrac{n-4}{2}\alpha_\infty P_{g_0}^{-1}(f u_\infty^{n+4 \over n-4}).$$
By the regularity theory of elliptic equations, one has
$$P_{g_0}u_\infty=\tfrac{n-4}{2}\alpha_\infty f u_\infty^{n+4 \over n-4}.$$
Thanks to the estimate \eqref{lb_L^2}, the strong maximum principle (cf. \cite{gur_mal} Theorem 2.2 or \cite{hy2} Proposition 3.1, respectively) yields $u_\infty>0$. Therefore, we conclude that up to a positive constant, the $Q$-curvature of a conformal metric $u_\infty^{4 \over n-4} g_0$ is equal to $f$. This completes the proof of the main Theorem \ref{main_Thm}.
\hfill $\Box$\\

\noindent {\bf Notes added after submission.} 

1. After a previous version of this article has been submitted to a journal, the author realized that some arguments of a recent paper \cite{hy2} are concerned with the left case mentioned in Remark \ref{leftcase} or Remark \ref{Rm_lcf_PMT}.

2. A former Phd student of Professor Xingwang Xu, Hong Zhang emailed me for his contribution of this work without attaching his article on 9 January 2015. Some more comments will be given by me only after I see his article of the work of nonlocal $Q$-curvature flow.

\end{document}